\let\thm@indent\indent}{\let\thm@indent\noindent}%
  {}{}
\newtheorem*{thm*}{Theorem}
\newcommand{\pa}{\partial}
\newcommand{\la}{\label}
\newcommand{\fr}{\frac}
\newcommand{\na}{\nabla}
\newcommand{\be}{\begin{equation}}
\newcommand{\ee}{\end{equation}}
\newcommand{\bea}{\begin{eqnarray}}
\newcommand{\eea}{\end{eqnarray}}
\newtheorem{thm}{Theorem}
\newtheorem{prop}{Proposition}
\newtheorem{lemma}{Lemma}
\newtheorem{cor}{Corollary}
\theoremstyle{definition}
\newtheorem{rem}{Remark}
\newcommand{\ve}{{\varepsilon}}
\newcommand{\rmd}{{\rm d}}
\def\wc{\rightharpoonup}
\newcommand{\bq}{\begin{equation}}
\newcommand{\eq}{\end{equation}}
\newcommand{\bqa}{\begin{eqnarray*}}
\newcommand{\eqa}{\end{eqnarray*}}
\def\XXint#1#2#3{{\setbox0=\hbox{$#1{#2#3}{\int}$ }
\vcenter{\hbox{$#2#3$ }}\kern-.6\wd0}}
\begin{document}

\title{Inviscid limit of vorticity distributions in Yudovich class}

\author{Peter Constantin}
\address{Department of Mathematics, Princeton University, Princeton, NJ 08544}
\email{const@math.princeton.edu}

\author{Theodore D. Drivas}
\address{Department of Mathematics, Princeton University, Princeton, NJ 08544}
\email{tdrivas@math.princeton.edu}

\author{Tarek M. Elgindi}
\address{Department of Mathematics, UC San Diego}
\email{telgindi@ucsd.edu}

 \maketitle
 
\date{today}


\begin{abstract}
We prove that given initial data $\omega_0\in L^\infty(\mathbb{T}^2)$, forcing $g\in L^\infty(0,T; L^\infty(\mathbb{T}^2))$, and any $T>0$, the solutions $u^\nu$ of Navier-Stokes converge strongly in $L^\infty(0,T;W^{1,p}(\mathbb{T}^2))$ for any $p\in [1,\infty)$ to the unique Yudovich weak solution  $u$ of the Euler equations.
A consequence is that vorticity distribution functions converge to their inviscid counterparts. As a byproduct of the proof, we establish continuity of the Euler solution map for Yudovich solutions in the $L^p$ vorticity topology.  The main tool in these proofs is a uniformly controlled loss of regularity property of the linear transport by Yudovich solutions. Our results provide a partial foundation for the Miller--Robert statistical equilibrium theory of vortices as it applies to slightly viscous fluids.
\end{abstract}

\section{Introduction }

In this paper we discuss the connection between Yudovich solutions of the Euler equations
\be\label{eu}
\partial_t \omega+ u\cdot \nabla \omega= g, 
\ee
with bounded forcing $g\in L^\infty(0,T;L^\infty(\mathbb{T}^2))$,
and initial data
\be
\omega(0)=\omega_0\in L^\infty(\mathbb{T}^2),
\la{ideu}
\ee
and the vanishing viscosity limit ($\lim_{\nu\to 0}$) of solutions of the Navier-Stokes equations,
\be\la{nse}
\partial_t \omega^\nu+ u^\nu\cdot \nabla \omega^\nu= \nu \Delta \omega^\nu+ g, 
\ee
with initial data
\be
\omega^{\nu}(0) = \omega^\nu_0\in L^{\infty}(\mathbb{T}^2),
\la{idnse}
\ee
and the same forcing $g$.  We consider uniformly bounded initial data
\be
\sup_{\nu>0}\|\omega^\nu_0\|_{L^{\infty}(\mathbb T^2)}\le \Omega_{0,\infty}<\infty.
\la{supid}
\ee
The solutions of (\ref{eu}), (\ref{ideu}), (\ref{nse}), (\ref{idnse}) are uniformly bounded in $L^{\infty}(\mathbb T^2)$:
\be
\sup_{\nu\ge  0}\sup_{0\le t\le T}\|\omega^{\nu}(t)\|_{L^{\infty}(\mathbb T^2)}
\le \Omega_{\infty} = \Omega_{0,\infty} + \int_0^T\|g(t)\|_{L^{\infty}(\mathbb T^2)}dt.
\la{omegafty}
\ee
This bound is valid in $\mathbb T^2$ or $\mathbb R^2$ but is not available if boundaries are present or in 3D. The bound will be used repeatedly below.

We are interested in the small viscosity behavior of vorticity distribution function $\pi_{\omega^{\nu}(t)}(\rmd y)$ defined by
\be\label{vordens}
\int  f(y) \pi_{\omega^\nu(t)}(\rmd y) = \int f(\omega^\nu(t,x)) \rmd x,
\ee
for all continuous functions (observables) $f$.  If $\omega_0^\nu \to \omega_0$  we  prove that the  distributions convergence 
\be \label{distconv}
\pi_{\omega^\nu(t)}(\rmd y)\ \ \xrightarrow[]{\nu \to 0} \ \  \pi_{\omega(t)}(\rmd y) = \pi_{\omega_0}(\rmd y),
\ee
where the time invariance of the vorticity distribution function for the Euler equations follows from Lagrangian transport $\omega(t)= \omega_0\circ X_t^{-1}$ and volume preservation of the homeomorphism $A_t=X_t^{-1}$.

The statement \eqref{distconv} is a consequence of the strong convergence of the vorticity in $L^\infty(0,T;L^p(\mathbb{T}^2))$ for all $p\in [1,\infty)$ and for any $T>0$.  We prove this fact here, extending previous work for vortex patch solutions with smooth boundary \cite{CW1}, and removing  additional assumptions on the Euler path \cite{CW2}.  Implications of our result for equilibrium theories of decaying two dimensional turbulence \cite{Miller,Robert} are briefly discussed at the end of this paper. Our main result is the following.
  
\begin{thm}\label{Thm}
Let $\omega$ be the unique Yudovich weak solution of the Euler equations with initial data $\omega_0\in L^\infty(\mathbb{T}^2)$ and forcing $g\in L^\infty(0,T;L^\infty(\mathbb{T}^2))$.  Let $\omega^\nu$ be the solution of the Navier-Stokes equation with the same forcing and initial data $\omega^\nu_0 \to \omega_0$ strongly in $L^2(\mathbb{T}^2)$. 
Then, for any $T>0$ and $p\in [1,\infty)$, the inviscid limit  $\omega^\nu\to \omega$  holds strongly in $L^\infty(0,T; L^p(\mathbb{T}^2))$:
\be
\lim_{\nu\to 0}\sup_{0\le t\le T}\|\omega^\nu(t)-\omega (t)\|_{L^p(\mathbb T^2)} = 0.
\la{invp}
\ee
Consequently, the distributions converge,
\be
\lim_{\nu\to 0} \pi_{\omega^\nu(t)}(\rmd y) = \pi_{\omega_0}(\rmd y),
\ee
for all $ t\in [0,T]$.
\end{thm}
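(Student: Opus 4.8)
The plan is to reduce the nonlinear assertion to a linear statement about transport--diffusion by the fixed Yudovich velocity field, and then exploit a loss-of-regularity estimate that is uniform in $\nu$. \emph{Step 1 (reduction to $L^2$, and the velocity limit).} Since $\|\omega^\nu(t)\|_{L^\infty},\|\omega(t)\|_{L^\infty}\le\Omega_\infty$ by \eqref{omegafty}, interpolation gives $\|\omega^\nu(t)-\omega(t)\|_{L^p}\le C\,\|\omega^\nu(t)-\omega(t)\|_{L^2}^{\min(1,2/p)}$ for every $p\in[1,\infty)$, so it suffices to prove $\sup_{[0,T]}\|\omega^\nu(t)-\omega(t)\|_{L^2}\to0$. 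I would first invoke the classical vanishing-viscosity estimate for Yudovich solutions --- a Gronwall argument on $\|u^\nu-u\|_{L^2}^2$ closed through the log-Lipschitz modulus of $u$, together with $\|\omega^\nu_0-\omega_0\|_{L^2}\to0$ --- which yields $u^\nu\to u$ in $C([0,T];L^2(\mathbb{T}^2))$; combined with the uniform bound of $u^\nu$ in $C([0,T];W^{1,p})$ for every $p<\infty$ (Calder\'on--Zygmund and \eqref{omegafty}) and the compact embedding $W^{1,p}\hookrightarrow C^0$, this upgrades to $u^\nu\to u$ in $C([0,T];C^\beta(\mathbb{T}^2))$ for every $\beta<1$. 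This is all that is needed about the nonlinear coupling.

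\emph{Step 2 (mollification).} Let $S^\nu_t$ denote the solution operator of the \emph{linear} problem $\partial_t\theta+u^\nu\cdot\nabla\theta=\nu\Delta\theta+f$ with the Navier--Stokes field $u^\nu$, and $S^0_t$ the transport operator with the Euler field $u$, so $\omega^\nu(t)=S^\nu_t[\omega^\nu_0,g]$ and $\omega(t)=S^0_t[\omega_0,g]$. Each $S^\nu_t$ is linear in its arguments and, since $u^\nu$ is divergence free, a contraction on $L^p$ in the data and bounded by $\int_0^t\|f(s)\|_{L^p}\,ds$ in the forcing ($S^0_t$ being an $L^p$-isometry in the data). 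I would mollify $\omega_0,g$ to smooth $\omega_0^\delta,g^\delta$ with $\|\omega_0^\delta\|_{L^\infty}\le\|\omega_0\|_{L^\infty}$; the contraction estimates, the interpolation bound for $\|\omega^\nu_0-\omega_0\|_{L^p}$, and $\int_0^T\|g(s)-g^\delta(s)\|_{L^p}\,ds\to0$ then reduce \eqref{invp} to showing, for fixed smooth data, that $S^\nu_t[\omega_0^\delta,g^\delta]\to S^0_t[\omega_0^\delta,g^\delta]$ in $C([0,T];L^p)$ as $\nu\to0$.

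\emph{Step 3 (smooth data: controlled loss of regularity plus compactness).} This is where the main lemma enters: transport of smooth data by a family of velocity fields that is uniformly Yudovich (hence uniformly log-Lipschitz, by \eqref{omegafty}), with or without a vanishing diffusion, preserves H\"older regularity with at worst exponential-in-time loss of exponent and with bounds uniform in $\nu$, concretely $\sup_{\nu\ge0}\sup_{[0,T]}\|S^\nu_t[\omega_0^\delta,g^\delta]\|_{C^{\alpha_0 e^{-ct}}(\mathbb{T}^2)}\le C(\delta,T)$ with $\alpha_0=\alpha_0(\delta)>0$ and $c=c(\Omega_\infty)$. I would prove this from the advection--diffusion equation satisfied by finite differences $\tau_h\theta-\theta$, whose source is controlled by the log-Lipschitz modulus of $u^\nu$; equivalently one may use the stochastic representation $S^\nu_t[\omega_0^\delta,g^\delta](x)=\mathbb{E}\,\omega_0^\delta(A^{\nu}_t(x))+\int_0^t\mathbb{E}\,g^\delta(s,A^{\nu}_{s,t}(x))\,ds$, where $A^\nu$ is the backward stochastic flow of $-u^\nu+\sqrt{2\nu}\,\dot W$, and observe that, the noise being additive, it cancels in $A^\nu_t(x)-A^\nu_t(y)$, which therefore obeys the \emph{deterministic} Osgood bound $|A^\nu_t(x)-A^\nu_t(y)|\le C|x-y|^{e^{-ct}}$ uniformly in $\nu$. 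Granting this, $\{S^\nu_t[\omega_0^\delta,g^\delta]\}_\nu$ is bounded in $C([0,T];C^{\alpha_0 e^{-cT}})$ and, by interpolating that bound against the time-derivative bound read off from the equation, equicontinuous in $t$, so Arzel\`a--Ascoli produces a subsequence converging in $C([0,T]\times\mathbb{T}^2)$ to some $\Omega^*$; passing to the limit in the weak form of $\partial_t\Omega+\div(u^\nu\Omega)=\nu\Delta\Omega+g^\delta$ --- using $u^\nu\to u$ in $C([0,T];C^0)$ and $\nu\Delta(\cdot)\to0$ in $\mathcal{D}'$ --- shows $\Omega^*$ solves the transport equation with field $u$ and data $\omega_0^\delta$. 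Uniqueness of transport by the divergence-free log-Lipschitz field $u$ forces $\Omega^*=S^0_t[\omega_0^\delta,g^\delta]$; subsequence-independence of the limit then gives convergence of the whole family, which with Steps 1--2 proves \eqref{invp}.

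\emph{Step 4 (the main obstacle; distributions).} The crux is that the term $(u^\nu-u)\cdot\nabla S^0_t[\omega_0^\delta,g^\delta]$ cannot be absorbed by an $L^2$ energy estimate --- the diffusive gain $\nu\|\nabla(\cdot)\|_{L^2}^2$ is outweighed by the $\nu^{-1}$ produced on completing the square, and $\|u^\nu-u\|_{L^2}$ carries no $O(\sqrt\nu)$ rate --- so one genuinely needs the uniform-in-$\nu$ compactness furnished by the loss-of-regularity lemma; establishing that lemma, together with the attendant modulus-of-continuity and flow-map estimates for uniformly log-Lipschitz fields, is the main difficulty I anticipate. Once \eqref{invp} holds, the distributional statement follows: fix $t$ and a continuous observable $f$; since every $\omega^\nu(t)$ takes values in $[-\Omega_\infty,\Omega_\infty]$, $f$ is uniformly continuous there, and $\omega^\nu(t)\to\omega(t)$ in $L^1(\mathbb{T}^2)$ (the case $p=1$) gives, along any subsequence, a.e.\ convergence and hence $f(\omega^\nu(t))\to f(\omega(t))$ in $L^1$ by dominated convergence; subsequence-independence of the limit yields $\int f(y)\,\pi_{\omega^\nu(t)}(\rmd y)=\int f(\omega^\nu(t,x))\,\rmd x\to\int f(\omega(t,x))\,\rmd x=\int f(y)\,\pi_{\omega(t)}(\rmd y)$, and $\pi_{\omega(t)}=\pi_{\omega_0}$ because $\omega(t)=\omega_0\circ A_t$ with $A_t$ volume preserving.
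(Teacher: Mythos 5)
Your proposal is correct, and its outer architecture --- interpolation against the uniform bound \eqref{omegafty}, mollification of data and forcing, and comparison of the two \emph{linear} problems driven by the frozen velocities $u$ and $u^\nu$ --- coincides with the paper's. Where you genuinely diverge is in the core step, namely the convergence of the two linearized solutions to each other. The paper is quantitative: Lemmas \ref{explemm}, \ref{scalalemm} and \ref{lem} give a uniform-in-$\nu$ $H^1$ bound on the mollified vorticities for a short time $T_*\sim\Omega_\infty^{-1}$ (via a time-dependent Lebesgue exponent and the exponential integrability of $\nabla u$), which closes the explicit energy estimate \eqref{omdiff} once the Chemin-type velocity rate of Lemma \ref{chem} is available; a splitting of $[0,T]$ into intervals of length $T_*$ finishes. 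You instead propagate a H\"older modulus $C^{\alpha_0 e^{-ct}}$ uniformly in $\nu$ --- precisely the mechanism of Proposition \ref{flowreg}, which the paper deploys only for the Besov propagation result, not for Theorem \ref{Thm} --- and conclude by Arzel\`a--Ascoli together with uniqueness of bounded solutions of the limiting transport equation (DiPerna--Lions applies, since $u\in L^\infty_t W^{1,p}_x$ for all $p<\infty$ with vanishing divergence). This avoids the time-splitting and the quantitative velocity lemma, but yields no rate, which is exactly the feature the authors stress and exploit in Corollary \ref{cor}. Your Step 4 diagnosis of why the naive $L^2$ energy estimate fails, and your derivation of the distributional convergence from the $L^1$ statement, match the paper. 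Two details to firm up: Chemin's estimate is stated for $g=0$ and identical data, so you should note that the forcing cancels in the difference equation and the same Osgood argument applies (this is the content of Lemma \ref{chem}); and the finite-difference route to the H\"older bound is delicate (the term $(\tau_h u^\nu-u^\nu)\cdot\nabla\tau_h\theta$ is not obviously controlled), so you should rely on the flow/Feynman--Kac representation, which is sound.
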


\begin{rem}
There are several senses in which this theorem is sharp. First, there can be no infinite time result as the Euler solution is conservative and the Navier-Stokes solution is dissipative.  This is obvious if we consider the stationary solutions $\omega_0(x)=\sin(Nx)$ and $g=0$. Secondly, there can be no rate without additional regularity assumptions on $\omega_0$, as is the case for the heat equation.    Thirdly, there can be no strong convergence in $L^\infty$ because $\omega_0$ may not be continuous while $\omega^\nu$ is smooth for any $t>0$. And, finally there can be no strong convergence for $p>1$ in domains with boundaries, if the boundary condition of the Navier-Stokes solutions is no slip, and the Euler solution has non-vanishing tangential velocity at the boundary, in other words, if there are boundary layers \cite{Kelliher}.
\end{rem}

\begin{rem}
One implication of theorem \ref{Thm} is that the dissipation of convex functions of vorticity must vanish,
\be\label{enstdiss}
\lim_{\nu \to 0} \nu \int_0^T \int_{\mathbb{T}^2} f''(\omega^\nu) |\nabla \omega^\nu|^2 \rmd x \rmd t = 0.
\ee
In the special case when $f(x)= |x|^2/2$, the above is the enstrophy dissipation (palenstrophy).  In fact, it was proved by Eyink that anomalous enstrophy dissipation requires that $\omega_0\notin L^2(\mathbb{T}^2)$ \cite{Eyink01,HML06}.  The idea is that, if  $\omega_0 \in L^2(\mathbb{T}^2)$, the enstrophy remains uniformly-in-$\nu$ bounded since it is non-increasing under the Navier-Stokes evolution. Applying the Aubin-Lions lemma yields weak convergence on subsequences to $\omega$, a weak solution of the Euler equations  (possibly non-unique).  Thus $\omega^\nu\to \omega$ in $C(0,T; w-L^2(\mathbb{T}^2))$. Moreover, for such initial data, all weak Euler solutions can be shown to be renormalized in the sense of DiPerna-Lions and hence conservative \cite{CS15}.  Thus, by weak lower semi-continuity of the $L^2$ norm, the Navier-Stokes enstrophy balance implies also that norms converge and hence the convergence is strong in $L^2$, pointwise in time, i.e.  
$\omega^\nu(t)\to \omega$ in $L^2(\mathbb{T}^2)$ for each $t\in [0,T]$.  In fact, whenever the vorticity converges weakly to a conservative weak Euler solution, one has strong convergence and there can be no anomaly.  The convergence can be made uniform in time. This proof using compactness, however, inherently gives a qualitative statement and one cannot extract information about rates of convergence.  On the other hand, our proof is quantitative. Specifically, given information on, say, the spectrum of the initial vorticity at high wavenumber, one can obtain a rate of convergence. One class of examples which we discuss in corollary \ref{cor} concerns vorticity in the space $\omega_0\in L^\infty \cap B^s_{p,\infty}$ for $s>0$.  However, more generally, for any $\omega_0\in L^\infty$ our proof provides a computable rate of convergence depending only on $\omega_0$, independent of the particular subsequence $\{\nu_n\}_{n\geq 0}$. 
\end{rem}

A corollary of the proof of theorem \ref{Thm} and lemma \ref{chem} is the continuity of the Yudovich solution map $\omega(t)= S_{t}(\omega_0)$ in the $L^p$ topology when restricted to fixed balls in $L^\infty$. 
\begin{cor}\label{contcor}
Fix $T>0$, $\omega_0 \in L^\infty(\mathbb{T}^2)$ and $\ve>0$.  There exists $\delta:= \delta(\omega_0,T, \ve)$ so that for all $\omega \in L^\infty(\mathbb{T}^2)$, 
\be
\|\omega- \omega_0\|_{L^p(\mathbb T^2)}<\delta, \qquad \text{implies} \qquad  \sup_{t\in[0,T]} \|S_t(\omega)  - S_t(\omega_0)\|_{L^p(\mathbb T^2)} <\ve.
\ee
\end{cor}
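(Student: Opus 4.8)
The plan is to derive the continuity statement as a direct consequence of the quantitative stability estimate that must underlie the proof of Theorem \ref{Thm}. The key point is that Yudovich solutions are governed by linear transport with a log-Lipschitz velocity field, and the "uniformly controlled loss of regularity" property advertised in the abstract (encoded, presumably, in lemma \ref{chem} and lemma \ref{chem} -- I will use whatever quantitative transport estimate is stated there) gives a modulus of continuity for the solution map on balls of $L^\infty$. Concretely, write $\omega(t) = S_t(\omega)$ and $\omega_0(t) = S_t(\omega_0)$, and let $u(t)$, $u_0(t)$ be the associated Biot--Savart velocities. Both vorticities stay in the ball of radius $\Omega_\infty$ in $L^\infty$ by \eqref{omegafty} (taking $g$ fixed, or here $g=0$ suffices for the autonomous solution map; in any case the bound is uniform over the data ball). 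The difference $\delta\omega = \omega - \omega_0$ satisfies
\be
\pa_t \delta\omega + u\cdot\nabla \delta\omega = -(u-u_0)\cdot\nabla\omega_0(t),
\ee
so one controls $\|\delta\omega(t)\|_{L^p}$ by a Gr\"onwall-type argument against the right-hand side, where $\|u-u_0\|$ is bounded by $\|\delta\omega\|_{L^p}$ through Calder\'on--Zygmund, and $\nabla\omega_0(t)$ is the delicate factor since $\omega_0(t)$ need not be better than $L^\infty$.

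The main step, and the main obstacle, is handling $\nabla\omega_0(t)$. This is exactly where the loss-of-regularity lemma enters: even though $\omega_0 \in L^\infty$ only, the flow map of the Yudovich solution is quasi-Lipschitz, so $\omega_0(t) = \omega_0 \circ A_t$ enjoys a quantified (though viscosity-/time-dependent) modulus of continuity, and mollifying at scale $\ell$ one gets $\|\nabla \omega_{0,\ell}(t)\|_{L^\infty} \lesssim \ell^{-1}$ with an error $\|\omega_0(t) - \omega_{0,\ell}(t)\|_{L^p} \to 0$ as $\ell \to 0$ uniformly on $[0,T]$ (uniformity being the content of the controlled-loss lemma). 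Splitting $\nabla\omega_0(t)$ into the mollified part plus the error, the Gr\"onwall inequality closes in the form
\be
\|\delta\omega(t)\|_{L^p} \le \Big(\|\delta\omega(0)\|_{L^p} + \eta(\ell)\Big)\exp\!\big(C_{\Omega_\infty} \,\ell^{-1} t\big),
\ee
where $\eta(\ell)\to 0$ as $\ell\to0$. Alternatively -- and this is likely cleaner, matching the Yudovich uniqueness proof -- one avoids $\nabla\omega_0(t)$ altogether by testing against $|\delta\omega|^{p-1}\sgn(\delta\omega)$, integrating by parts the transport term (which vanishes since $\div u = 0$), and estimating $\int (u-u_0)\cdot\nabla\omega_0 \,|\delta\omega|^{p-2}\delta\omega$ after moving the derivative onto the test function and $u-u_0$; one then uses the Yudovich-type bound $\|u-u_0\|_{L^\infty} \lesssim \|\delta\omega\|_{L^p}^{1-1/p}\,\Phi(\|\delta\omega\|_{L^p})$ with $\Phi$ a logarithmic correction, yielding an Osgood inequality whose modulus degenerates but stays integrable on any finite ball, giving $\sup_{[0,T]}\|\delta\omega(t)\|_{L^p} \le \beta(\|\delta\omega(0)\|_{L^p})$ with $\beta$ continuous, $\beta(0)=0$, and $\beta$ depending only on $\Omega_\infty$ (hence on $\omega_0$ through the ball it lives in) and $T$.

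Given such an estimate, the corollary is immediate: choose $\delta = \delta(\omega_0,T,\ve)$ so small that $\beta(\delta) < \ve$ (respectively, so that the right-hand side of the Gr\"onwall display is $<\ve$ after first fixing $\ell$ small enough that $\eta(\ell)e^{C\ell^{-1}T}<\ve/2$ and then $\delta$ small enough that $\delta\, e^{C\ell^{-1}T}<\ve/2$). The restriction to a fixed $L^\infty$ ball is essential and enters through $\Omega_\infty$: the constants $C_{\Omega_\infty}$ and the modulus $\beta$ blow up as $\|\omega_0\|_{L^\infty}\to\infty$, which is why no continuity uniform over all of $L^\infty$ can be expected. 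I expect the write-up to be short once lemma \ref{chem} is in hand, since it is essentially the $\nu=0$, deterministic specialization of the argument already carried out for Theorem \ref{Thm}: one simply reads off the stability of the Euler flow from the same transport estimate, dropping the viscous error terms.
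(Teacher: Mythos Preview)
Your sketch has a genuine gap in both variants, and it differs in an essential way from the paper's argument.

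In your first approach you write the nonlinear difference equation
\[
\pa_t \delta\omega + u\cdot\nabla\delta\omega = -(u-u_0)\cdot\nabla\omega_0(t)
\]
and propose to split $\nabla\omega_0(t)$ into a mollified piece and a remainder. But whichever way you interpret $\omega_{0,\ell}(t)$, the estimate does not close. If $\omega_{0,\ell}(t)$ means $\omega_0(t)*\varphi_\ell$ (mollified at each time), then indeed $\|\nabla\omega_{0,\ell}(t)\|_{L^\infty}\lesssim \ell^{-1}$, but the remainder term $\int (u-u_0)\cdot\nabla(\omega_0-\omega_{0,\ell})\,\delta\omega$ forces an integration by parts that produces $\nabla\delta\omega$, which is uncontrolled since $\delta\omega$ is only in $L^\infty$. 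If instead $\omega_{0,\ell}(t)$ means the linear transport of mollified initial data (so that it solves a clean equation), then your claimed bound $\|\nabla\omega_{0,\ell}(t)\|_{L^\infty}\lesssim \ell^{-1}$ is false: transport by a log-Lipschitz field can grow gradients double-exponentially, and the loss-of-regularity lemma in the paper only gives an $L^2$ (not $L^\infty$) bound on $\nabla\omega_{0,\ell}(t)$, valid on a short interval $[0,T_*]$. Your second (Osgood) approach has the same defect: after moving the derivative off $\omega_0$ you land on $\nabla(|\delta\omega|^{p-2}\delta\omega)$, and there is no Yudovich-type inequality at the vorticity level that absorbs this.

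The paper's route avoids this entirely by never writing the nonlinear difference equation. Instead it uses the triangle inequality with \emph{two} auxiliary linear problems: $\omega_\ell$ transported by $u$ and $\omega_{0,\ell}$ transported by $u_0$, both starting from mollified data. The outer terms $\|\omega-\omega_\ell\|_{L^2}$ and $\|\omega_0-\omega_{0,\ell}\|_{L^2}$ are exactly the mollification errors of the initial data (conserved by transport with the \emph{same} velocity). The middle term $\|\omega_\ell-\omega_{0,\ell}\|_{L^2}$ is then estimated by an energy argument in which both functions are genuinely $H^1$ (Lemma~\ref{lem}), and the resulting bound involves only $\|u-u_0\|_{L^2}$, which Lemma~\ref{chem} at $\nu=0$ controls H\"older-continuously by $\|u(0)-u_0(0)\|_{L^2}$. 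One then chooses $\ell$ first and $\delta$ second, and iterates over subintervals of length $T_*$. The point you are missing is that the decomposition must be done \emph{before} taking the difference, not after.
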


The proof of Theorem \ref{Thm} is based on the fact that linear transport by 
Yudovich solutions has a short time uniformly controlled loss of regularity: it maps bounded sets in $W^{1,p}, \; p>2$ to bounded sets in $H^1$, uniformly in viscosity. 
More precisely, we consider the Yudovich solutions $\omega (t)$ and $\omega^\nu(t)$ of the Euler and Navier-Stokes equations with initial data $\omega_0\in L^{\infty}$ and denote their corresponding velocities by  $u(t)$ and, respectively,  $u^\nu(t)$. We take a sequence of regularizations $\omega_{0,n}\in W^{1,\infty}$ of $\omega_0$ which is uniformly bounded in $W^{1,p}, \; p>2$ and is such that $\omega_{0,n}\rightarrow \omega_0$ strongly in  $L^2$. We let $\omega_n(t)$ be the unique solutions of the linear transport problems \[\partial_t \omega_n+u\cdot\nabla\omega_n=0\] and respectively  $\omega_n^\nu(t)$ of  \[\partial_t \omega_n^\nu+u^\nu\cdot\nabla\omega_n^\nu=\nu\Delta\omega_n^\nu.\] On one hand, $\omega_n(t)$ remains close to $\omega(t)$ and $\omega_n^\nu(t)$ remains close to $\omega^\nu(t)$ in $L^p$ spaces because linear transport bu Yudovich velocities is clearly bounded in $L^p$. The essential additional ingredient we show is a controlled loss of regularity: $\omega_n(t)$ and $\omega_n^\nu(t)$ are bounded in $H^1$ on a short time interval by their initial norms in $W^{1,p}$, $p>2$. This uses the fact that $\nabla u$ and $\nabla u^\nu$ are exponentially integrable. The rest of the proof rests on these observations as well as energy estimates and a time splitting. 

In the direction of propagating regularity, we also prove the fact that if additional smoothness is assumed on the data then some degree of fractional smoothness in $L^p$ can be propagated uniformly in viscosity.  We consider the unforced case $g=0$ and we fix initial data $\omega_0^\nu = \omega_0$ for simplicity, the natural extension being straightforward.
\begin{prop}\label{propReg}
Suppose $\omega_0\in (L^\infty\cap B^{s}_{p,\infty})(\mathbb{T}^2)$ for some  $s>0$ and some $p\geq 1$. Then the solutions of the Navier-Stokes equations satisfy $\omega^\nu(t)\in(L^\infty\cap B^{s(t)}_{p,\infty})(\mathbb{T}^2)$ uniformly in $\nu$, where 
\[
s(t) = {s}\exp(-Ct\|\omega_0\|_{L^{\infty}(\mathbb T^2)})
\]
for some universal constant $C>0$.
\end{prop}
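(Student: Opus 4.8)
The plan is to track how the Besov regularity $B^s_{p,\infty}$ degrades under the Navier-Stokes flow, using the characterization of $B^s_{p,\infty}$ by finite differences: $\omega \in B^s_{p,\infty}$ iff $\sup_{|h|>0} |h|^{-s}\|\omega(\cdot+h)-\omega(\cdot)\|_{L^p} < \infty$. First I would fix $h\in\mathbb{R}^2$ and set $\delta_h \omega^\nu(t,x) := \omega^\nu(t,x+h)-\omega^\nu(t,x)$. Subtracting the vorticity equation \eqref{nse} (with $g=0$) at the points $x+h$ and $x$, one gets a transport-diffusion equation for $\delta_h\omega^\nu$:
\be
\partial_t \delta_h\omega^\nu + u^\nu(x+h)\cdot\nabla \delta_h\omega^\nu = \nu\Delta \delta_h\omega^\nu - \delta_h u^\nu \cdot (\nabla\omega^\nu)(x+h).
\ee
The key point is that the diffusion $\nu\Delta$ and the transport term are both benign for $L^p$ estimates — the transport term drops out upon multiplying by $|\delta_h\omega^\nu|^{p-2}\delta_h\omega^\nu$ and integrating (incompressibility), and the Laplacian contributes a nonpositive term. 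So the only contribution to the growth of $\|\delta_h\omega^\nu(t)\|_{L^p}$ is the commutator/stretching term $\delta_h u^\nu\cdot(\nabla\omega^\nu)(x+h)$.

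The heart of the argument is to control $\|\delta_h u^\nu(t)\|_{L^\infty}$ in terms of $\|\delta_h\omega^\nu(t)\|_{L^p}$ with a \emph{logarithmic} loss. This is the standard Yudovich-type modulus-of-continuity estimate for the Biot-Savart kernel: since $u^\nu = \nabla^\perp\Delta^{-1}\omega^\nu$, one has a bound of the form $\|\delta_h u^\nu\|_{L^\infty} \lesssim \|\omega^\nu\|_{L^\infty}\,|h|\log(1/|h|)$ when $|h|$ is small, but more usefully here one wants $\|\delta_h u^\nu\|_{L^\infty} \lesssim \|\omega^\nu\|_{L^\infty} + \|\delta_h \omega^\nu\|_{L^p}$ up to constants and logarithmic factors — actually the cleanest route is: $\|\delta_h u^\nu\|_{L^\infty} \leq C\,\|\omega^\nu\|_{L^\infty}\, \phi(|h|)$ where $\phi(r)=r(1+\log^+(1/r))$, combined with interpolating the stretching term. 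Let me instead set $Y(t) := \sup_{0<|h|\le 1} \frac{\|\delta_h\omega^\nu(t)\|_{L^p}}{|h|^{s(t)}}$ and aim for a differential inequality. Using Hölder on the stretching term, $\|\delta_h u^\nu \cdot (\nabla\omega^\nu)(\cdot+h)\|_{L^p}$ — but $\nabla\omega^\nu$ is not controlled uniformly in $\nu$, so one cannot put a derivative on $\omega^\nu$. The correct move is to rewrite the stretching term by \emph{shifting the difference}: $\delta_h u^\nu\cdot\nabla_x[\omega^\nu(x+h)] = \nabla_x\cdot(\delta_h u^\nu\, \omega^\nu(x+h)) $ using $\div\,\delta_h u^\nu=0$; testing against $|\delta_h\omega^\nu|^{p-2}\delta_h\omega^\nu$ and integrating by parts moves the gradient onto $|\delta_h\omega^\nu|^{p-2}\delta_h\omega^\nu$, producing $(p-1)\int |\delta_h\omega^\nu|^{p-2}\, \delta_h u^\nu\cdot\nabla\delta_h\omega^\nu\,\omega^\nu(x+h)$, which one absorbs partly into the diffusion term via Cauchy-Schwarz and partly bounds using $\|\omega^\nu\|_{L^\infty}\le\Omega_\infty$ and $\|\delta_h u^\nu\|_{L^\infty}$. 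A cleaner and more standard alternative — and the one I would actually carry out — is to estimate directly in the Lagrangian picture: $\delta_h\omega^\nu$ along characteristics of $u^\nu$ with added diffusion, and bound $\frac{d}{dt}\|\delta_h\omega^\nu\|_{L^p}\le \|\delta_h u^\nu\cdot\nabla\omega^\nu(\cdot+h)\|_{L^p}$; then note $|h|$ is itself transported-and-distorted, but for the Besov exponent it suffices to control the ratio.

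Carrying the estimate through, the differential inequality for $Y(t)$ takes the schematic form $\frac{d}{dt}\log Y(t) \le C\Omega_\infty\big(1 + |\log Y(t)|/\ldots\big)$ — more precisely, combining $\|\delta_h u^\nu\|_{L^\infty}\le C\Omega_\infty |h|(1+\log(1/|h|))$ with an interpolation $|h|\,Y(t)\,|h|^{s(t)-1}$ one extracts, after optimizing in $|h|$, a bound $Y(t)\le Y(0)^{1}$ raised to a power that decays, together with an additive term controlled by $\Omega_\infty$; the upshot is that $\|\delta_h\omega^\nu(t)\|_{L^p}\lesssim |h|^{s(t)}$ with $s(t) = s\exp(-Ct\|\omega_0\|_{L^\infty})$, where the exponential comes from the Grönwall-type factor generated by the $\log(1/|h|)$ in the velocity modulus of continuity interacting with $|h|^{s(t)}$. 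Finally, the $L^\infty$ bound $\|\omega^\nu(t)\|_{L^\infty}\le\Omega_\infty$ is already given by \eqref{omegafty}, so $\omega^\nu(t)\in L^\infty\cap B^{s(t)}_{p,\infty}$ uniformly in $\nu$. The main obstacle is precisely handling the stretching term $\delta_h u^\nu\cdot(\nabla\omega^\nu)(\cdot+h)$ without any uniform-in-$\nu$ control on $\nabla\omega^\nu$: this forces one to keep the difference quotient structure intact (never differentiate $\omega^\nu$), exploit $\div u^\nu = 0$ to integrate by parts, and use the log-Lipschitz modulus of the Biot-Savart kernel — the same mechanism that makes Yudovich's uniqueness proof work — to close a nonlinear Grönwall inequality whose solution exhibits the advertised exponential decay of the smoothness exponent.
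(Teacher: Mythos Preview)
Your approach has a genuine gap at exactly the point you flag as ``the main obstacle,'' and the resolution you sketch does not work uniformly in $\nu$. In the finite-difference equation
\[
(\partial_t + u^\nu(\cdot+h)\cdot\nabla - \nu\Delta)\,\delta_h\omega^\nu = -\,\delta_h u^\nu\cdot(\nabla\omega^\nu),
\]
a direct $L^p$ estimate on the right-hand side requires $\|\nabla\omega^\nu\|_{L^p}$, which is not available uniformly in $\nu$ (and is generally infinite for $\nu=0$ with Yudovich data). Your proposed fix --- write $\delta_h u^\nu\cdot\nabla\omega^\nu = \nabla\cdot(\delta_h u^\nu\,\omega^\nu)$ and integrate by parts --- moves the derivative onto $|\delta_h\omega^\nu|^{p-2}\delta_h\omega^\nu$, producing a term containing $\nabla\delta_h\omega^\nu$. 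Absorbing this into the diffusion via Young's inequality yields a factor $\nu^{-1}$ (for $p=2$, the bound is $\|\omega^\nu\|_{L^\infty}\|\delta_h u^\nu\|_{L^2}\|\nabla\delta_h\omega^\nu\|_{L^2} \le \tfrac{\nu}{2}\|\nabla\delta_h\omega^\nu\|_{L^2}^2 + \tfrac{1}{2\nu}\|\omega^\nu\|_{L^\infty}^2\|\delta_h u^\nu\|_{L^2}^2$), so the estimate blows up as $\nu\to 0$. The difficulty is structural: an Eulerian energy method on $\delta_h\omega^\nu$ cannot avoid one spatial derivative falling on $\omega^\nu$ or on $\delta_h\omega^\nu$, and neither option closes uniformly in viscosity.

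The paper circumvents this entirely by working in Lagrangian coordinates via the stochastic representation $\omega^\nu(t) = \mathbb{E}[\omega_0\circ A_t]$, where $A_t$ is the inverse of the stochastic flow $\rmd X_t = u^\nu(X_t,t)\,\rmd t + \sqrt{2\nu}\,\rmd W_t$. Because the noise is spatially constant, it cancels in the difference $X_t(x)-X_t(y)$; the log-Lipschitz bound on $u^\nu$ then gives, pathwise and uniformly in $\nu$, that $A_t\in C^{\alpha(t)}$ with $\alpha(t)=\exp(-Ct\|\omega_0\|_{L^\infty})$. Regularity of $\omega^\nu(t)$ is then inherited from regularity of $\omega_0$ by composition with a H\"older map (using the Triebel--Lizorkin characterization $F_p^s$ and Jensen's inequality for the expectation), never requiring any derivative of the evolving solution. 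This Lagrangian mechanism is precisely what is missing from your Eulerian sketch; the Gr\"onwall-with-log that you anticipate does appear, but it lives at the level of the flow map, not at the level of $\|\delta_h\omega^\nu\|_{L^p}$.
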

The proof of Proposition \ref{propReg} relies on the fact that the velocity is log-Lipschitz uniformly in $\nu$ and shows that the exponential estimate with loss of \cite{BC94} holds uniformly in viscosity.  Our proof  uses the stochastic Lagrangian representation formula of \cite{CI08}:
\be\label{traj}
\rmd X_t(x) = u^\nu(X_t(x),t)\rmd t+ \sqrt{2\nu} \ \rmd W_t, \qquad X_0(x)=x,
\ee
 yielding the representation formula
\be \label{stochRep}
\omega^\nu(t) = \mathbb{E}\left[\omega_0\circ A_t \right]
\ee
where back-to-labels map is defined as $A_t= X_t^{-1}$.
 The noisy Lagrangian picture allows for a nearly direct application of the Theorems and proofs of \cite{BC94,BCD11} to the viscous case.  We remark that the uniform Sobolev regularity can be established by similar arguments; if $\omega_0\in (L^\infty\cap W^{s,p})(\mathbb{T}^2)$ then  $\omega^\nu(t)\in(L^\infty\cap W^{s(t),p})(\mathbb{T}^2)$ with uniformly bounded norms.  
 
The uniform regularity of Proposition \ref{propReg}  is used  to deduce
\begin{cor}\label{cor}
Let $\omega_0\in (L^\infty\cap B^{s}_{2,\infty})(\mathbb{T}^2)$ with $s>0$ and let  $\omega$ and $\omega^\nu$ solve respectively (\ref{eu})
and (\ref{nse}), with the same initial data $\omega^\nu_0=\omega_0$.  Then the $L^p$ convergence of vorticity, for any $p\in[1,\infty)$ and any finite time $T>0$, occurs at the rate
\be
\sup_{t\in [0,T]}\|\omega^\nu(t)- \omega(t)\|_{L^p(\mathbb{T}^2)} \lesssim    (\nu T)^{\frac{ s\exp(-2CT\|\omega_0\|_{\infty})}{p(1+ s \exp(-CT\|\omega_0\|_{\infty}))}- },
\ee
with the universal constant $C>0$ in Proposition \ref{propReg}.
\end{cor}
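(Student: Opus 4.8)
The plan is to combine the uniform Besov regularity from Proposition \ref{propReg} with the $L^2$-convergence rate that is implicit in the quantitative proof of Theorem \ref{Thm}, and then upgrade to $L^p$ by interpolation against the uniform $L^\infty$ bound \eqref{omegafty}. First I would record that the proof of Theorem \ref{Thm} does not merely yield qualitative convergence: tracing through the time-splitting argument, the regularization error $\|\omega^\nu - \omega^\nu_n\|_{L^2} + \|\omega - \omega_n\|_{L^2}$ is controlled by $\|\omega_0 - \omega_{0,n}\|_{L^2}^{\theta(t)}$ (for a Yudovich-type H\"older exponent $\theta(t)$), the $H^1$ bound on $\omega_n^\nu, \omega_n$ on a short time interval is of order $\|\omega_{0,n}\|_{W^{1,p}}$, and the energy estimate contributes a factor like $\sqrt{\nu t}\,\|\omega_{0,n}\|_{H^1}$. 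The upshot is a bound of the form
\[
\sup_{t\in[0,T]}\|\omega^\nu(t) - \omega(t)\|_{L^2} \lesssim \inf_{n}\Big( \|\omega_0 - \omega_{0,n}\|_{L^2}^{\theta(T)} + \sqrt{\nu T}\, \|\omega_{0,n}\|_{H^1}\Big),
\]
possibly iterated over the short time intervals to cover $[0,T]$, with $\theta(T) = \exp(-CT\|\omega_0\|_\infty)$ coming from the Yudovich uniqueness/stability estimate.

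Next I would choose the regularization $\omega_{0,n}$ optimally against the Besov hypothesis. Since $\omega_0\in B^s_{2,\infty}$, a standard Littlewood--Paley truncation at frequency $2^N$ (or mollification at scale $2^{-N}$) gives $\|\omega_0 - \omega_{0,n}\|_{L^2}\lesssim 2^{-Ns}$ and $\|\omega_{0,n}\|_{H^1}\lesssim 2^{N(1-s)}$ (when $s<1$; if $s\ge 1$ the data is already in $H^1$ and the argument is easier). Optimizing $2^{-Ns\theta(T)} + \sqrt{\nu T}\,2^{N(1-s)}$ over $N$ by balancing the two terms gives $2^N \sim (\nu T)^{-1/(2(1-s) + 2s\theta(T))}$ and hence an $L^2$ rate $(\nu T)^{\frac{s\theta(T)}{2(1-s+s\theta(T))}}$. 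Here it is cleanest to write $\theta(T) = \exp(-CT\|\omega_0\|_\infty)$, noting that $1 - s + s\theta(T) = 1 + s(\theta(T)-1) \le 1 + s\,e^{-CT\|\omega_0\|_\infty}$ after adjusting $C$, which recovers the denominator $p(1 + s\exp(-CT\|\omega_0\|_\infty))$ once $p$ is brought in. Actually, to keep uniform-in-$\nu$ control of $\|\omega^\nu_n(t)\|_{H^1}$ on the full interval $[0,T]$ rather than just short times, I would instead feed Proposition \ref{propReg} directly: $\omega^\nu(t)$ itself lies in $B^{s(T)}_{2,\infty}$ uniformly, with $s(T) = s\exp(-CT\|\omega_0\|_\infty)$, so one can regularize $\omega^\nu(t)$ and $\omega(t)$ at time $t$ at scale $2^{-N}$, paying $2^{-Ns(T)}$ for the truncation error (in $L^2$) and $2^{N}$ for the resulting $H^1$ norm; this is the more robust route and avoids iterating the splitting.

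Then, to pass from $p=2$ to general $p\in[1,\infty)$, I would interpolate: for $p\le 2$, $\|\cdot\|_{L^p}\lesssim \|\cdot\|_{L^2}$ on the torus, so the $p=2$ rate suffices; for $p>2$, write $\|\omega^\nu - \omega\|_{L^p} \le \|\omega^\nu - \omega\|_{L^2}^{2/p}\,\|\omega^\nu-\omega\|_{L^\infty}^{1-2/p} \lesssim \|\omega^\nu - \omega\|_{L^2}^{2/p}$, using the uniform bound \eqref{omegafty}. Dividing the $L^2$ exponent $\frac{s(T)}{2(\dots)}$ by $p/2$ produces exactly the stated exponent $\frac{s\exp(-2CT\|\omega_0\|_\infty)}{p(1 + s\exp(-CT\|\omega_0\|_\infty))}$ up to the bookkeeping of the two occurrences of $C$ (one from $s(T)$, one from $\theta(T)$), which is why the numerator carries $\exp(-2CT\|\cdot\|_\infty)$ while the denominator carries $\exp(-CT\|\cdot\|_\infty)$; the trailing ``$-$'' in the exponent absorbs the endpoint loss in the $B^s_{p,\infty}\hookrightarrow W^{s',p}$ embeddings and the logarithmic factors from optimizing. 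The main obstacle I anticipate is \emph{extracting a clean quantitative $L^2$ rate from the proof of Theorem \ref{Thm}}: one must verify that every constant in the time-splitting and energy estimates depends on $\omega_0$ only through $\|\omega_0\|_\infty$ (and on $T$), that the Yudovich stability exponent genuinely degrades like $\exp(-CT\|\omega_0\|_\infty)$, and that when this is iterated or applied on $[0,T]$ the composition of exponents does not worsen the rate beyond what is claimed. The interpolation and Besov-optimization steps are routine once that rate is in hand.
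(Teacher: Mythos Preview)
Your route is different from the paper's and, as written, has gaps. The paper does \emph{not} quantify the regularization argument from Theorem~\ref{Thm} at all. Instead it proves a one-line interpolation inequality (Lemma~\ref{lemma}),
\[
\|f\|_{L^2} \le \|f\|_{H^{-1}}^{s'/(1+s')}\,\|f\|_{B^s_{2,\infty}}^{1/(1+s')},\qquad 0<s'<s,
\]
and applies it directly to $f=\omega^\nu(t)-\omega(t)$. The point you are missing is that $\|\omega^\nu-\omega\|_{H^{-1}} = \|u^\nu-u\|_{L^2}$, so Lemma~\ref{chem} already supplies a rate on this factor (roughly $\nu^{\frac12\exp(-cT)}$), while Proposition~\ref{propReg} bounds the $B^{s(t)}_{2,\infty}$ norm uniformly in $\nu$. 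One multiplication of exponents gives the $L^2$ rate; your $L^p$ interpolation against the $L^\infty$ bound is then exactly what the paper does. The two occurrences of the exponential in the final exponent come from $s(T)=s\exp(-CT\|\omega_0\|_\infty)$ (Proposition~\ref{propReg}) and from the velocity rate in Lemma~\ref{chem}, respectively.

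Concretely, several steps in your outline do not match the actual estimates. First, in the proof of Proposition~\ref{prop} the regularization error $\|\omega-\omega_\ell\|_{L^2}$ is bounded \emph{linearly} by $\|\omega_0-\varphi_\ell*\omega_0\|_{L^2}$, not by a power $\theta(T)$; the Yudovich-type exponential loss enters only through the velocity rate in Lemma~\ref{chem}. Second, the energy estimate \eqref{omdiff} contains the term $C_\ell T\|u^\nu-u\|_{L^\infty_t L^2_x}$ in addition to $\nu C_\ell^2 T$; you kept only the latter, so your balancing $2^{-Ns\theta(T)}+\sqrt{\nu T}\,2^{N(1-s)}$ is incomplete. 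Third, the $H^1$ constant $C_\ell$ from Lemma~\ref{lem} is not $\|\omega_{0,n}\|_{H^1}$ but rather $\|\nabla\omega_{0,n}\|_{L^{2p}}^p$ for some $p>1$ (this is the controlled loss of regularity), so the factor $2^{N(1-s)}$ is not the right scaling. Fourth, that $H^1$ bound holds only on $[0,T_*]$, and iterating the splitting forces you to re-regularize at each step with the degraded Besov exponent $s(nT_*)$---a bookkeeping you have not carried out. Your ``robust route'' of truncating $\omega^\nu(t)$ and $\omega(t)$ at frequency $2^N$ is actually the right idea: writing $\|S_N f\|_{L^2}\le 2^N\|f\|_{H^{-1}}$ and $\|(I-S_N)f\|_{L^2}\lesssim 2^{-Ns(T)}\|f\|_{B^{s(T)}_{2,\infty}}$ and optimizing over $N$ is precisely the interpolation lemma above. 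But you phrase it as producing an ``$H^1$ norm'' to feed back into the Theorem~\ref{Thm} machinery, which is not the mechanism; the $H^{-1}$/velocity-$L^2$ identification is what makes the argument close.
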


\begin{rem}
Recently, the estimate with loss of \cite{BC94} was sharpened for fixed $p\in(1,\infty)$ in \cite{BN19} where it is shown that the propagated regularity decays inversely with time rather than exponentially, i.e. $ \tilde{s}(t) = {s}/{(1+Ct p s  )}$  for some universal constant $C>0$. See Corollary 1.4 of  \cite{BN19}. This improvement is accomplished by taking greater advantage of the uniform exponential integrability of the velocity gradient stated in Lemma \ref{explemm} below.
The stochastic representation can also be used to show uniform boundedness of the vorticity in $\omega^\nu(t)\in(L^\infty\cap B^{\tilde{s}(t)}_{p,\infty})$ as was done in Proposition  \ref{propReg}.  We omit details here, which are straightforward extensions of the proofs of \cite{BN19}.  This extension can lead to an improved rate in Corollary \ref{cor}.
\end{rem}

Corollary \ref{cor} applies in particular to the to inviscid limits of vortex patches with non-smooth boundary. Indeed, lemma 3.2 of \cite{CW2} shows that  if $\omega_0= \chi_\Omega$ is the characteristic function of a bounded domain whose boundary has  box-counting (fractal) dimension $D$ not larger than the dimension of space $d=2$, i.e. $d_F(\partial \Omega ):= D<2$, then $\omega_0\in B_{p,\infty}^{(2-D)/p}(\mathbb{T}^2)$.  Proposition \ref{propReg} then shows that some degree of fractional Besov regularity of the solution $\omega^\nu(t)$ is retained uniformly in viscosity for any finite time $T<\infty$ and corollary \ref{cor} provides a rate depending only $D, T$ and $p$ at which the vanishing viscosity limit holds, removing therefore the need for the additional assumptions on the solution imposed in \cite{CW2}.

\section{Proof }

\begin{proof}[Proof of Theorem \ref{Thm}]
It suffices to prove that 
\be
\lim_{\nu\to 0} \sup_{t\in [0,T]} \|\omega^\nu(t)-\omega(t)\|_{L^2(\mathbb{T}^2)}=0.
\ee 
Indeed, convergence in $L^p$ for any  $p\in [2,\infty)$ then follows from interpolation and boundedness in $L^\infty$: 
\begin{align}
\|\omega^\nu(t)-\omega(t)\|_{L^p(\mathbb{T}^2)} &\leq 2\Omega_{\infty}^{\frac{p-2}{p}}\|\omega^\nu(t)-\omega(t)\|_{L^2(\mathbb{T}^2)}^{\frac{2}{p}}.
\end{align}
In order to  establish strong $L^\infty_tL^2_x$ convergence for arbitrary finite times $T$, it is enough to the convergence for a short time which depends only on a uniform $L^{\infty}$ bound on the initial vorticity:
\begin{prop}\label{prop}
Let $\omega$ and $\omega^\nu$ solve (\ref{eu}) and (\ref{nse}) respectively, with initial data (\ref{ideu}) and (\ref{idnse}). Assume that the Navier-Stokes initial data converge uniformly in $L^{2}(\mathbb T^2)$
\be
\lim_{\nu\to 0}\|\omega^\nu_0 - \omega_0\|_{L^2(\mathbb T^2)} = 0.
\la{limidl2}
\ee 
Assume also that there exists a contant $\Omega_{\infty}$ such that the initial data are uniformly bounded in $L^{\infty}(\mathbb T^2)$:
\be
\sup_{\nu>0}\|\omega_0^{\nu}\|_{L^{\infty}(\mathbb T^2)}\le \Omega_{\infty}.
\la{unifid}
\ee
Then there exists a constant $C_*$ such that the vanishing viscosity limit 
holds
\be
\lim_{\nu\to 0} \sup_{t\in [0,T_*]} \|\omega^\nu(t)-\omega(t)\|_{L^2(\mathbb{T}^2)}=0
\ee
on the time interval $[0, T_*]$ where
\be
T_*= (C_*  \Omega_{\infty})^{-1}.
\la{tstar}
\ee
\end{prop}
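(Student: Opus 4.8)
The plan is to combine two ingredients: the classical Yudovich energy estimate, run robustly in the viscosity at the level of \emph{velocities}, and a regularization of the vorticity that upgrades this to $L^2$-control of \emph{vorticities} via the controlled loss of regularity of Lemma~\ref{chem}. Fix once and for all an exponent $p>2$ (say $p=4$); by Lemmas~\ref{explemm} and~\ref{chem} this fixes a time $T_*=(C_*\Omega_\infty)^{-1}$, with $C_*$ depending only on $p$ and the exponential integrability constant, on which linear transport (with or without the Laplacian) by $u$ and $u^\nu$ maps bounded sets of $W^{1,p}$ into bounded sets of $H^1$, uniformly in $\nu$. Choose regularizations $\omega_{0,n}\in W^{1,\infty}$ with $\|\omega_{0,n}\|_{L^\infty}\le\Omega_\infty$ and $\omega_{0,n}\to\omega_0$ in $L^2$, and spatial mollifications $g_n$ of the forcing with $\|g_n\|_{L^\infty_tL^\infty_x}\le\|g\|_{L^\infty_tL^\infty_x}$, smooth in $x$, and $g_n\to g$ in $L^2((0,T_*)\times\mathbb T^2)$. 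Let $\omega_n$ and $\omega_n^\nu$ solve the regularized \emph{linear} problems $\pa_t\omega_n+u\cdot\na\omega_n=g_n$ and $\pa_t\omega_n^\nu+u^\nu\cdot\na\omega_n^\nu=\nu\Delta\omega_n^\nu+g_n$ with common datum $\omega_{0,n}$; writing each as the transport of its datum plus the Duhamel integral of $g_n$ and applying Lemma~\ref{chem} to each piece yields a finite quantity $M_n:=\sup_{[0,T_*]}\big(\|\omega_n(t)\|_{H^1}+\|\omega_n^\nu(t)\|_{H^1}\big)$, uniform in $\nu$ (though possibly growing with $n$, which will be harmless).

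Organize the estimate through the triangle inequality $\|\omega^\nu(t)-\omega(t)\|_{L^2}\le\|\omega^\nu(t)-\omega_n^\nu(t)\|_{L^2}+\|\omega_n^\nu(t)-\omega_n(t)\|_{L^2}+\|\omega_n(t)-\omega(t)\|_{L^2}$. The two outer terms need no regularity: $\omega-\omega_n$ and $\omega^\nu-\omega_n^\nu$ solve linear transport, respectively transport--diffusion, by $u$, respectively $u^\nu$, with source $g-g_n$, so the $L^2$ energy identity (the viscous term having a good sign) bounds them on $[0,T_*]$ by $\|\omega_0-\omega_{0,n}\|_{L^2}+T_*^{1/2}\|g-g_n\|_{L^2((0,T_*)\times\mathbb T^2)}=:a_n$ and by $\|\omega_0^\nu-\omega_0\|_{L^2}+a_n=:b_\nu+a_n$, where $a_n\to0$ as $n\to\infty$ and $b_\nu\to0$ as $\nu\to0$. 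For the middle term one needs smallness of $u^\nu-u$, which is obtained from the Yudovich argument: in $\pa_t(u^\nu-u)+u^\nu\cdot\na(u^\nu-u)+(u^\nu-u)\cdot\na u+\na(p^\nu-p)=\nu\Delta u^\nu$ the forcing cancels, so with $Y(t)=\|u^\nu(t)-u(t)\|_{L^2}^2$,
\[
\tfrac12 Y'(t)=-\int(u^\nu-u)\cdot\na u\cdot(u^\nu-u)\,\rmd x+\nu\int\Delta u^\nu\cdot(u^\nu-u)\,\rmd x .
\]
The first term is $\lesssim\|\na u\|_{L^r}\|u^\nu-u\|_{L^{2r'}}^2\lesssim r\Omega_\infty\,\mu(Y)$ after interpolating $L^{2r'}$ between $L^2$ and a fixed higher $L^q$, using $\|\na u\|_{L^r}\lesssim r\|\omega\|_{L^\infty}$, and optimizing in $r$, with $\mu$ an Osgood modulus; the viscous term equals $-\nu\|\na u^\nu\|_{L^2}^2+\nu\int\na u^\nu{:}\na u\le\tfrac\nu2\|\na u\|_{L^2}^2=\tfrac\nu2\|\omega\|_{L^2}^2\le\tfrac\nu2\Omega_\infty^2|\mathbb T^2|$, which is $O(\nu)$ and harmless precisely because $\mathbb T^2$ has no boundary (cf.\ \eqref{omegafty}). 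Since $\|u_0^\nu-u_0\|_{L^2}\lesssim\|\omega_0^\nu-\omega_0\|_{L^2}\to0$, the Osgood lemma gives $\sup_{[0,T_*]}\|u^\nu-u\|_{L^2}\to0$; interpolating against the uniform bound $\|u^\nu-u\|_{C^{0,\alpha}}\lesssim\|\omega^\nu-\omega\|_{L^\infty}\le2\Omega_\infty$ then yields $\tilde\eta(\nu):=\sup_{[0,T_*]}\|u^\nu(t)-u(t)\|_{L^\infty}\to0$ as $\nu\to0$.

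With $\tilde\eta(\nu)$ in hand, the middle term is immediate: in $\pa_t(\omega_n^\nu-\omega_n)+u^\nu\cdot\na(\omega_n^\nu-\omega_n)=\nu\Delta\omega_n^\nu+(u-u^\nu)\cdot\na\omega_n$ the forcing $g_n$ cancels, and the $L^2$ energy estimate, using $\nu\int\Delta\omega_n^\nu(\omega_n^\nu-\omega_n)\le\tfrac\nu2\|\na\omega_n\|_{L^2}^2\le\tfrac\nu2M_n^2$ (after absorbing the good term $-\tfrac\nu2\|\na\omega_n^\nu\|_{L^2}^2$) and $\big|\int(u-u^\nu)\cdot\na\omega_n(\omega_n^\nu-\omega_n)\big|\le\tilde\eta(\nu)M_n\|\omega_n^\nu-\omega_n\|_{L^2}$, gives by Gronwall on $[0,T_*]$
\[
\sup_{[0,T_*]}\|\omega_n^\nu(t)-\omega_n(t)\|_{L^2}^2\le\big(\nu M_n^2+2\tilde\eta(\nu)M_n\big)\,T_*\,e^{2\tilde\eta(\nu)M_nT_*}=:c_{n,\nu},
\]
so that for \emph{each fixed} $n$ one has $c_{n,\nu}\to0$ as $\nu\to0$, since $\tilde\eta(\nu)\to0$ and $M_n$ is then a fixed finite number. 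Combining the three pieces, $\sup_{[0,T_*]}\|\omega^\nu-\omega\|_{L^2}\le2a_n+b_\nu+c_{n,\nu}^{1/2}$; given $\ve>0$ one first fixes $n$ with $2a_n<\ve/2$ and then $\nu_0>0$ with $b_\nu+c_{n,\nu}^{1/2}<\ve/2$ for $\nu<\nu_0$, which proves the Proposition on $[0,T_*]$, $T_*=(C_*\Omega_\infty)^{-1}$.

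The main obstacle, and the reason a regularization is unavoidable, is that the Yudovich vorticity carries no Sobolev regularity, so the direct energy estimate for $\omega^\nu-\omega$ produces the uncontrollable terms $\nu\int\na\omega^\nu\cdot\na\omega$ and $\int(u-u^\nu)\cdot\na\omega\,(\omega^\nu-\omega)$; it is precisely the short-time, viscosity-uniform $W^{1,p}\to H^1$ bound of Lemma~\ref{chem} that renders the viscous term genuinely $O(\nu)$ once $\omega$ is replaced by $\omega_n$, while the residual transport term is tamed by the $\nu$-robust Osgood estimate for the velocities, whose viscous correction is only $O(\nu)$ thanks to the absence of boundaries. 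The delicate point in the bookkeeping is that $M_n$ may blow up as $n\to\infty$, so the order of limits must be respected — $n$ chosen first, then $\nu\to0$ — and one must check that the forcing-mollification errors $a_n$ and the regularization errors never interact with $M_n$; this is what forces the two-parameter limit above rather than a single Gronwall inequality, and is where I would expect most of the care to be needed.
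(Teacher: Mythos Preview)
Your approach is essentially identical to the paper's: regularize data and forcing, solve the corresponding \emph{linear} transport and advection--diffusion problems in the fixed Yudovich velocities $u$ and $u^\nu$, split $\omega^\nu-\omega$ by the triangle inequality, control the outer terms by $L^2$-stability of linear transport (the diffusive term having a good sign), and close the middle term using a uniform-in-$\nu$ $H^1$ bound on the regularized vorticities together with convergence of the velocities; the paper likewise fixes the regularization scale first and then sends $\nu\to 0$. Two minor points: (i) you have the labels swapped --- in the paper Lemma~\ref{chem} is precisely the Chemin-type \emph{velocity} convergence you re-derive by the Osgood argument, while the short-time $W^{1,p}\!\to\!H^1$ bound you need is Lemma~\ref{lem}, obtained from Lemmas~\ref{explemm} and~\ref{scalalemm}; (ii) the upgrade of $u^\nu-u$ to $L^\infty$ is correct but unnecessary, since in the middle-term energy estimate one can instead place $u^\nu-u$ in $L^2$, $\nabla\omega_\ell^\nu$ in $L^2$, and $\omega_\ell^\nu-\omega_\ell$ in $L^\infty$ (bounded by $2\Omega_\infty$ via the maximum principle for linear transport), which is what the paper does.
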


Once this proposition is established, the proof of theorem \ref{Thm} follows by dividing the time interval $[0,T]$ in  subintervals
$$
[0,T] = [0,T_*] \cup [T_*, 2T_*] \cup\cdots
$$
where $T_*$ is determined from the uniform bound (\ref{omegafty}), 
and applying proposition \ref{prop} to each interval, with initial data $\omega(nT_*)$, and respectively $ \omega^\nu( nT_*)$.  As there is no required rate of convergence for the initial data in proposition \ref{prop}, theorem \ref{Thm} follows.

\begin{proof}[Proof of Proposition \ref{prop}]
We introduce functions $\omega_\ell$ and $\omega_\ell^\nu$ which
are the unique solutions of the following \emph{linear} problems. We fix $\ell>0$ and let
\begin{align}\label{omell}
\partial_t \omega_\ell + u\cdot \nabla \omega_\ell &=\varphi_\ell *g , \qquad\qquad\quad \ \ \ \omega_\ell(0)=\varphi_\ell *{\omega}_0,\\
\partial_t \omega_\ell^\nu+ u^\nu\cdot \nabla \omega_\ell^\nu &=\nu \Delta \omega_\ell^\nu+\varphi_\ell *g , \qquad \omega_\ell^\nu(0)=\varphi_\ell *{\omega}_0^\nu, \label{omellnu}
\end{align}
where $\varphi_\ell $ is a standard mollifier at scale $\ell$ and where $u$ and $u^\nu$ are respectively the unique solutions of Euler and Navier-Stokes equations.  Note that the solutions to the linear problems \eqref{omell} and \eqref{omellnu} exist globally and are unique because the Yudovich velocity field $u$ is log-Lipshitz.  We observe that we have
\begin{align*}
\|\omega^\nu(t)-\omega(t)\|_{L^2(\mathbb{T}^2)} &\leq  \|\omega(t)-\omega_\ell(t)\|_{L^2(\mathbb{T}^2)} + \|\omega^\nu(t)-\omega^\nu_\ell(t)\|_{L^2(\mathbb{T}^2)} \\
&\qquad + \|\omega^\nu_\ell(t)-\omega_\ell(t)\|_{L^2(\mathbb{T}^2)}.
\end{align*}
Because the equations for $\omega_\ell,\omega_\ell^\nu$ and, respectively $\omega,\omega^\nu$ share the same incompressible velocities, we find
\begin{align}
\|\omega(t)-\omega_\ell(t)\|_{L^2(\mathbb{T}^2)}&\leq  \|{\omega}_0-\varphi_\ell *{\omega}_0\|_{L^2(\mathbb{T}^2)} +\int_0^t  \| g(s)- \varphi_\ell * g(s)\|_{L^2(\mathbb{T}^2)}\rmd s,\\
 \|\omega^\nu(t)-\omega^\nu_\ell(t)\|_{L^2(\mathbb{T}^2)}&\leq  \|{\omega}_0^\nu-\varphi_\ell *{\omega}_0^\nu\|_{L^2(\mathbb{T}^2)} +\int_0^t \| g(s)- \varphi_\ell * g(s)\|_{L^2(\mathbb{T}^2)}\rmd s.
\end{align}
As mollification can be removed strongly in $L^p$, the two terms in the right hand sides converge to zero as $\ell,\nu\to 0$, in any order.  
It remains to show that 
\begin{align} \label{convFixell}
\lim_{\nu\to 0}\sup_{t\in [0,T_*]} \|\omega^\nu_\ell(t)-\omega_\ell(t)\|_{L^2(\mathbb{T}^2)}\to 0
\end{align}
for fixed $\ell$. In order to establish this, we use two auxilliary results. The first one is a general statement about the Biot-Savart law in dimension two.

\begin{lemma}\label{explemm}
Let $\omega\in L^\infty(\mathbb{T}^2)$ and let $u$ be obtained from $\omega$ by the Biot-Savart law 
\[
u= K[\omega] = \nabla^\perp (-\Delta)^{-1} \omega.
\]
There exist constants $\gamma>0$ (nondimensional  and $C_K$ (with units of area) such that 
\be\label{expinte}
 \int_{\mathbb{T}^2} \exp\left\{ \beta |\nabla u(x)|\right\} \rmd x \leq  C_K
\ee
holds for any $\beta>0$ such that
\be
\beta\|\omega\|_{L^{\infty}(\mathbb T^2)} \le \gamma.
\la{betagamma}
\ee
\end{lemma}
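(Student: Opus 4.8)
The plan is to exploit the classical elliptic regularity theory for the Biot–Savart kernel on $\mathbb{T}^2$, specifically the fact that $\nabla u = \nabla \nabla^\perp (-\Delta)^{-1}\omega$ is given by a matrix of Calderón–Zygmund operators acting on $\omega$. The key quantitative input is the borderline estimate for such singular integral operators: for $2 \le p < \infty$ one has $\|\nabla u\|_{L^p(\mathbb{T}^2)} \le C\, p\, \|\omega\|_{L^p(\mathbb{T}^2)}$, with $C$ a universal constant (on the torus one may either work directly with the periodic kernel, which differs from the whole-space kernel by a smooth remainder, or pass through $\mathbb{R}^2$ after localizing; the smooth part contributes a lower-order term absorbable into the constant). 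Combining this with the trivial bound $\|\omega\|_{L^p(\mathbb{T}^2)} \le |\mathbb{T}^2|^{1/p}\|\omega\|_{L^\infty(\mathbb{T}^2)}$ gives, for every integer $k \ge 1$,
\be
\|\nabla u\|_{L^{2k}(\mathbb{T}^2)} \le 2Ck\, |\mathbb{T}^2|^{1/(2k)}\,\|\omega\|_{L^\infty(\mathbb{T}^2)} \le C' k\, \|\omega\|_{L^\infty(\mathbb{T}^2)}
\eq
for a universal $C'$ (absorbing the area factor, which is bounded for $k\ge1$).

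Next I would expand the exponential in a power series and estimate term by term. Writing $M := \|\omega\|_{L^\infty(\mathbb{T}^2)}$, one has
\be
\int_{\mathbb{T}^2} \exp\{\beta|\nabla u|\}\,\rmd x = \sum_{k=0}^\infty \frac{\beta^k}{k!}\int_{\mathbb{T}^2}|\nabla u|^k\,\rmd x
\le |\mathbb{T}^2| + \sum_{k=1}^\infty \frac{\beta^k}{k!}\, |\mathbb{T}^2|^{1/2}\, \|\nabla u\|_{L^{2k}(\mathbb{T}^2)}^{k},
\eq
using $\|f\|_{L^k} \le |\mathbb{T}^2|^{1/(2k)}\|f\|_{L^{2k}}$ for the even-index norms. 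Inserting the bound above yields a series with general term bounded by $\frac{(\beta C' M)^k k^k}{k!}$, and since $k^k/k! \le e^k$ by Stirling, this is $\le |\mathbb{T}^2|^{1/2}\sum_{k\ge1}(e C' \beta M)^k$. This geometric series converges, and is bounded by a universal constant, precisely when $eC'\beta M \le 1/2$, say; so the lemma holds with $\gamma := (2eC')^{-1}$ and $C_K$ equal to $|\mathbb{T}^2| + |\mathbb{T}^2|^{1/2}$ times the resulting geometric sum. (One could instead split the integral over $\{|\nabla u| \le \lambda\}$ and its complement and use a Chebyshev/Markov bound $|\{|\nabla u|>\lambda\}| \le (C' k M/\lambda)^{2k}$ optimized in $k$, which reproduces the same exponential tail; either route works.)

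The main obstacle is getting the \emph{linear-in-$p$} growth of the Calderón–Zygmund constant with a truly universal (dimension-only) coefficient, and making sure the periodic setting does not spoil this. On $\mathbb{R}^2$ the bound $\|R_i R_j f\|_{L^p} \lesssim p\|f\|_{L^p}$ for Riesz transforms as $p\to\infty$ is standard (e.g. via the Calderón–Zygmund decomposition and interpolation, or Stein's theory); transferring to $\mathbb{T}^2$ requires noting that the periodic Biot–Savart kernel equals the $\mathbb{R}^2$ kernel plus a $C^\infty$ correction (the kernel has a logarithmic singularity with the same principal part), so $\nabla u$ decomposes as a genuine CZ piece plus a convolution against a bounded smooth kernel, the latter obeying $\|\cdot\|_{L^p}\lesssim \|\omega\|_{L^1}\lesssim \|\omega\|_{L^\infty}$ uniformly in $p$. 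Everything else is the bookkeeping above; the nondimensionality of $\gamma$ and the area-dimension of $C_K$ come out automatically from tracking the single factor $|\mathbb{T}^2|$.
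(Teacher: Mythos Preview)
Your proof is correct and follows essentially the same route as the paper's: expand the exponential as a power series, invoke the linear-in-$p$ Calder\'on--Zygmund bound $\|\nabla u\|_{L^p}\le C_* p\|\omega\|_{L^p}$, control $k^k/k!$ via Stirling, and sum the resulting geometric series under the smallness condition $\beta\|\omega\|_{L^\infty}\le \gamma$. Your detour through the $L^{2k}$ norms (rather than $L^k$ directly) is a small but clean way to stay in the regime $p\ge 2$ where the CZ constant is known to grow linearly, and your remarks on transferring the estimate from $\mathbb{R}^2$ to $\mathbb{T}^2$ are apt; the paper simply cites Stein for this and also notes the BMO/John--Nirenberg route as an alternative.
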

\begin{proof}[Proof of Lemma \ref{explemm}]
The bound \eqref{expinte} holds due to the fact that Calderon-Zygmund operators map $L^\infty$ to BMO  \cite{Stein}, $\omega\in L^\infty \mapsto \nabla u = \nabla K[u] \in BMO$, and from  the John-Nirenberg inequality \cite{Nirenberg} for BMO functions.  We provide below a direct and elementary argument (modulo a fact about norms of singular intergal operators), for the sake of completeness.

We recall that there exists a constant $C_*$ so that for all $p\geq 2$,
\be\label{BSbnd}
\|\nabla K[v]\|_{L^p(\mathbb{T}^2)}=\|\nabla \otimes \nabla (-\Delta)^{-1} v\|_{L^p(\mathbb{T}^2)}\leq C_* p \|v\|_{L^p(\mathbb{T}^2)}.
\ee
 (See \cite{Stein}).  The dependence of \eqref{BSbnd} on $p$ is the important point. Thus,  
\begin{align} \nonumber
   \int_{\mathbb{T}^2} e^{  \beta |\nabla u|} \rmd x&= \sum_{p=0 }^\infty  \beta^p \frac{\|\nabla u\|^p_{L^p(\mathbb{T}^2)} }{p!} \leq   \sum_{p=0 }^\infty   \frac{  \left(   C_* \beta  \|\omega\|_{L^p(\mathbb{T}^2)}\right)^pp^p}{p!}\\
   &\leq  |\mathbb{T}^2|  \sum_{p=0 }^\infty   \frac{  \left(   C_* \beta \|\omega\|_{L^\infty(\mathbb{T}^2)}\right)^pp^p}{p!}.
\end{align}
This is a convergent series provided $ C_* \beta \|\omega\|_{L^\infty(\mathbb{T}^2)}<1/e$.  Indeed, this can be seen using Stirling's bound $n!\geq \sqrt{2\pi} n^{n+1/2} e^{-n} $ which yields
\be
\sum_{p=0}^\infty \frac{c^pp^p}{p!} \leq 1+\sum_{p=1}^\infty \frac{p^{-1/2}}{\sqrt{2\pi}   }(ce)^{p} \leq \frac{1}{1-ce}, \quad \text{provided}\quad  c \in[0, 1/e)
\ee
where $c:=C_* \beta \|\omega\|_{L^\infty(\mathbb{T}^2)}$. We may take thus
\be
\gamma = (2C_*e)^{-1}, \quad  C_K = 2\left |\mathbb T^2\right|.
\la{gammack}
\ee
The constant $\gamma$ depends on the Biot-Savart kernel and is nondimensional, the constant $C_K$ then is proportional to the area of the domain. 

  \end{proof}

The second auxilliary result concerns scalars transported and amplified by a velocity with bounded curl in two dimensions.
  \begin{lemma}\label{scalalemm}
Let $u:=u(x,t)$ be divergence free and $\omega:=\nabla^\perp \cdot u  \in L^\infty(0,T;L^\infty(\mathbb{T}^2))$ with
\be
\sup_{0\le t\le T}\|\omega(t)\|_{L^{\infty}(\mathbb T^2)}\le \Omega_{\infty}.
\la{omegfty}
\ee
Consider a nonnegative scalar field $\theta:=\theta(x,t)$ satisfying the differential inequality
\be\label{scalareqn}
\partial_t \theta + u \cdot \nabla \theta - \nu \Delta \theta  \leq |\nabla u |\theta + f, 
\ee
with initial data  $\theta|_{t=0}=\theta_0\in L^\infty(\mathbb{T}^2)$, and forcing $f\in L^\infty(0,T;L^\infty(\mathbb{T}^2))$. 
Let $\gamma>0$ be the constant from Lemma \ref{explemm}.  Then, for any $p>1$ and the time $T(p)= \frac{\gamma(p-1)}{2p\Omega_{\infty}}$ it holds that 
\be
\sup_{t\in [0,T(p)]} \|\theta(t)\|_{L^2(\mathbb{T}^2)} \leq C_1 \|\theta_0\|_{L^{2p}(\mathbb{T}^2)}^{p} + C_2
\ee
for some constants $C_1, C_2$ depending only on $p$, $\Omega_{\infty}$ and $\|f\|_{L^\infty(0,T;L^\infty(\mathbb{T}^2))}$.
\end{lemma}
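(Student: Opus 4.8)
The plan is to estimate the evolution of $\|\theta(t)\|_{L^q(\mathbb{T}^2)}$ for a moving exponent $q = q(t)$, chosen to decrease from $2p$ down to $2$ over the time interval $[0, T(p)]$, in the spirit of a ``moving $L^q$ norm'' or hypercontractivity-type argument. First I would multiply the differential inequality \eqref{scalareqn} by $\theta^{q-1}$ (legitimate since $\theta \ge 0$) and integrate over $\mathbb{T}^2$. The transport term $\int u\cdot\nabla\theta \, \theta^{q-1}\,dx = \frac1q\int u\cdot\nabla(\theta^q)\,dx = 0$ by incompressibility, and the viscous term contributes $-\nu(q-1)\int \theta^{q-2}|\nabla\theta|^2\,dx \le 0$, so it can be discarded. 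This leaves
\be
\frac{1}{q}\frac{\rmd}{\rmd t}\|\theta\|_{L^q}^q \le \int_{\mathbb{T}^2} |\nabla u|\, \theta^q \,\rmd x + \int_{\mathbb{T}^2} f\, \theta^{q-1}\,\rmd x.
\ee
The forcing term is handled by H\"older and Young, $\int f\theta^{q-1} \le \|f\|_{L^\infty}\|\theta\|_{L^q}^{q-1}|\mathbb{T}^2|^{1/q} \lesssim \|\theta\|_{L^q}^q + C(\|f\|_\infty, q)$, which after dividing by $\|\theta\|_{L^q}^{q-1}$ contributes at most a term like $C(1 + \|\theta\|_{L^q})$ to $\frac{\rmd}{\rmd t}\|\theta\|_{L^q}$; this piece only generates the additive constant $C_2$ in the conclusion and is not the essential difficulty.

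The heart of the matter is the term $\int |\nabla u|\,\theta^q\,\rmd x$, and here I would let the exponent move. If $q=q(t)$ is decreasing, then $\frac{\rmd}{\rmd t}\|\theta\|_{L^q}^q$ also picks up a contribution from $\dot q$; writing $N(t) := \log\|\theta(t)\|_{L^{q(t)}}$ one computes
\be
\dot N = \frac{\dot q}{q}\left(\fint_{\mathbb{T}^2}\frac{\theta^q}{\|\theta\|_{L^q}^q}\log\frac{\theta^q}{\|\theta\|_{L^q}^q}\,\rmd x\right)\cdot\frac{1}{\text{(something)}} + \frac{1}{q\|\theta\|_{L^q}^q}\frac{\rmd}{\rmd t}\Big(\|\theta\|_{L^q}^q\Big)\Big|_{q\text{ fixed}},
\ee
i.e. the moving-exponent correction produces a relative entropy term $\propto (-\dot q)\,\mathrm{Ent}(\theta^q/\|\theta\|_{L^q}^q)$ with a favorable sign. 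To control $\int|\nabla u|\theta^q\,dx = \|\theta\|_{L^q}^q \fint |\nabla u|\,\frac{\theta^q}{\|\theta\|_{L^q}^q}\,dx$ I would apply the entropy inequality (the variational characterization $\int gh \le \int g\log g + \log\int e^h$ for a probability density $g$): with $g = \theta^q/\|\theta\|_{L^q}^q$ and $h = \beta|\nabla u|$,
\be
\beta\fint |\nabla u|\, g\,\rmd x \le \fint g\log g\,\rmd x + \log\left(\fint e^{\beta|\nabla u|}\,\rmd x\right) \le \mathrm{Ent}(g) + \log\frac{C_K}{|\mathbb{T}^2|},
\ee
where the last bound is exactly Lemma \ref{explemm}, valid as long as $\beta \le \gamma/\Omega_\infty$. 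Choosing $\beta = \gamma/\Omega_\infty$ and matching $-\dot q/q$ against $1/\beta$ — that is, taking $q$ to satisfy $-\dot q = \frac{q\Omega_\infty}{\gamma}$, so $q(t) = 2p\,e^{-\Omega_\infty t/\gamma}$ — makes the relative entropy term from the moving exponent cancel (or dominate) the $\mathrm{Ent}(g)$ coming from the entropy inequality, leaving only the harmless constant $\log(C_K/|\mathbb{T}^2|)$ and the forcing terms. One then checks that $q$ reaches $2$ precisely when $e^{-\Omega_\infty t/\gamma} = 1/p$, i.e. at time $t = \frac{\gamma\log p}{\Omega_\infty}$; since $\log p \ge \frac{p-1}{p}$ for $p>1$, this is at least $T(p) = \frac{\gamma(p-1)}{2p\Omega_\infty}$ (the factor $2$ giving room to absorb lower-order terms), so $q(T(p)) \le 2$ and the argument closes.

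Integrating $\dot N \le C(1 + e^{-N})$ (schematically) on $[0,T(p)]$ then yields $\|\theta(T(p))\|_{L^2} \le \|\theta(T(p))\|_{L^{q(T(p))}} \lesssim \|\theta_0\|_{L^{2p}} \cdot e^{CT(p)} + C_2$, and since the $L^q$ norms are monotone along the trajectory one also controls $\sup_{t\in[0,T(p)]}\|\theta(t)\|_{L^2}$; tracking the powers carefully (the $q$-dependence in the constant of the entropy/Young steps and the fact that we start at exponent $2p$) produces the stated bound $C_1\|\theta_0\|_{L^{2p}}^p + C_2$ with $C_1, C_2$ depending only on $p$, $\Omega_\infty$, and $\|f\|_{L^\infty_t L^\infty_x}$. \emph{The main obstacle} I anticipate is bookkeeping the moving-exponent differential identity correctly — in particular getting the sign and magnitude of the $\dot q$ relative-entropy term right so that it genuinely absorbs the $\mathrm{Ent}(g)$ term rather than merely matching it up to a sign error — and verifying that the chosen $q(t)$ together with the constraint $\beta\Omega_\infty \le \gamma$ from Lemma \ref{explemm} is consistent on the whole interval $[0,T(p)]$; everything else (the transport and viscous terms vanishing/having good signs, the forcing estimate) is routine.
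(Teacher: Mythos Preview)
Your approach is essentially the paper's: both arguments track $\|\theta(t)\|_{L^{q(t)}}$ with a decreasing exponent, use the Young/entropy inequality $ab \le e^a + b\log b$ (the paper applies it pointwise with $a=\beta|\nabla u|$ and $b=\beta^{-1}\theta^{q}$; you use the equivalent integrated variational form with the probability density $g=\theta^q/\|\theta\|_{L^q}^q$), and invoke Lemma~\ref{explemm} to bound $\int e^{\beta|\nabla u|}$. The transport, viscous, and forcing terms are handled identically.

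However, your ODE for $q$ is off by one power of $q$, and this is precisely the bookkeeping slip you anticipated. A clean computation of $\dot N$ with $N=\log\|\theta\|_{L^{q}}$ gives the moving-exponent contribution $\tfrac{\dot q}{q^{2}}\,\mathrm{Ent}(g)$ (your ``something'' is $q$), while the entropy inequality contributes $\tfrac{1}{\beta}\,\mathrm{Ent}(g)$. Cancellation therefore requires
\[
\dot q = -\frac{q^{2}}{\beta},\qquad\text{not}\qquad \dot q = -\frac{q}{\beta}.
\]
With your exponential choice the residual $\tfrac{1}{\beta}\bigl(1-\tfrac{1}{q}\bigr)\mathrm{Ent}(g)\ge 0$ remains, and since $\mathrm{Ent}(g)$ is not a priori bounded the estimate does not close. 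With the correct ODE one gets $q(t)=\tfrac{2p\beta}{\beta+2pt}$, which reaches $q=2$ at $t=\tfrac{\beta(p-1)}{2p}$ --- \emph{exactly} $T(p)$, with no need for the inequality $\log p\ge (p-1)/p$ or the spare factor of $2$. The paper runs the identical computation in the variable $p(t)=q(t)/2$, obtaining $p'(t)=-2p^{2}/\beta$ and the same terminal time.
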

\begin{proof}[Proof of Lemma \ref{scalalemm}] Let $p:=p(t)$ with $p(0)=p_0$ and time dependence of $p(t)$ to be specified below. Consider
\begin{align}  \nonumber
\frac{1}{2}\frac{\rmd}{\rmd t} \int_{\mathbb{T}^2} |\theta|^{2p(t)} \rmd x &= p'(t) \int_{\mathbb{T}^2} \ln|\theta| |\theta|^{2p(t)} \rmd x+ p(t)\int_{\mathbb{T}^2} |\theta|^{2p(t)-2} \theta \partial_t \theta  \rmd x \\ \nonumber
&\leq p'(t) \int_{\mathbb{T}^2} \ln|\theta| |\theta|^{2p(t)} \rmd x- p(t)\int_{\mathbb{T}^2} |\theta|^{2p(t)-2} \theta u \cdot \nabla \theta \rmd x  \\ \nonumber
&\quad + \nu p(t)\int_{\mathbb{T}^2} |\theta|^{2p(t)-2} \theta \Delta \theta \rmd x  +  p(t)\int_{\mathbb{T}^2} |\theta|^{2p(t)-2} |\nabla u| \theta^2 \rmd x  \\
&\quad + p(t)\int_{\mathbb{T}^2} |\theta|^{2p(t)-2} \theta f  \rmd x.
\end{align}
We now use the following facts
\begin{align}
\int_{\mathbb{T}^2} |\theta|^{2p-2} \theta f  \rmd x  &\leq    C\|f\|_{L^\infty(0,T;L^\infty(\mathbb{T}^2))} \|\theta\|_{2p}^{2p-1},\\
p\int_{\mathbb{T}^2} |\theta|^{2p-2} \theta u \cdot \nabla \theta \rmd x&=  \frac{1}{2} \int_{\mathbb{T}^2} u\cdot \nabla( |\theta|^{2p})  \rmd x =0,\\ 
 \nu \int_{\mathbb{T}^2} |\theta|^{2p-2} \theta \Delta \theta \rmd x &= -\nu (2p-1) \int_{\mathbb{T}^2} |\theta|^{2p-2} |\nabla  \theta|^2\rmd x \leq 0.
\end{align}
In the second equality we used the fact that the velocity is divergence free.
Altogether we find thus
\begin{align}  \nonumber
\frac{1}{2}\frac{\rmd}{\rmd t} \|\theta(t)\|_{2p(t)}^{2p(t)} \rmd x &\leq p'(t) \int_{\mathbb{T}^2} \ln|\theta| |\theta|^{2p(t)} \rmd x \\
&\qquad +  p(t)\int_{\mathbb{T}^2} |\theta|^{2p(t)}   |\nabla u|  \rmd x + p(t)  \|f\|_{L^\infty} \|\theta\|_{2p}^{2p-1}.
\end{align}
We now use the following elementary inequality: for $a\in \mathbb{R}$ and  $b>0$,
\be\label{elemIq}
ab \leq e^a+ b \ln b-b.
\ee
In fact, we use only that $ab \leq e^a+ b \ln b$. 
The inequality \eqref{elemIq} is proved via calculus and follows because the Legendre transform of the convex function $b\ln b-b+1$ is $e^a-1$. Setting $a= \beta |\nabla u|$ and $b=   \frac{1}{\beta} |\theta|^{2p}$, applying \eqref{elemIq} and Lemma \ref{explemm} we obtain
\begin{align}  \nonumber
\frac{1}{2}\frac{\rmd}{\rmd t}\|\theta(t)\|_{2p(t)}^{2p(t)} &\leq p'(t) \int_{\mathbb{T}^2} \ln|\theta| |\theta|^{2p} \rmd x+  \frac{p(t)}{\beta} \int_{\mathbb{T}^2} \ln(\beta^{-1}|\theta|^{2p}) |\theta|^{2p} \rmd x\\ \nonumber
&\qquad  +p(t) \int_{\mathbb{T}^2} e^{ \beta |\nabla u|} \rmd x+   C p(t) \|f\|_{L^\infty} \|\theta\|_{2p}^{2p-1}\\ \nonumber
&\leq\left( p'(t)+ \frac{2p(t)^2}{\beta}\right) \int_{\mathbb{T}^2} \ln|\theta| |\theta|^{2p} \rmd x + \frac{p(t)}{\beta}\ln (\beta^{-1})\|\theta(t)\|_{2p}^{2p}\\ 
&\qquad +p(t)C_K+   Cp(t) \|f\|_{L^\infty} \|\theta\|_{2p}^{2p-1},
\end{align}
 where $C_K$ is the constant from Lemma \ref{explemm} and $\beta = \fr{\gamma}{\Omega_{\infty}}$ depends on the bound for $\|\omega(t)\|_{L^{\infty}}$.  We now choose $p$ to evolve according to
 \be\label{peq}
  p'(t)=- 2\beta^{-1} p(t)^2, \ \ p(0)=p_0\quad  \implies \quad p(t)=  \frac{\beta p_0}{\beta + 2p_0 t}.
 \ee
 Note that $p(t)$ is a positive monotonically decreasing function of $t$.  Let the time $t_*$ defined by $t_*=T(p_0):=\beta(p_0-1)/2p_0$ be such that  $p(t_*)=1$.  Then $p(t)\in [1,p_0]$ for all $t\in [0,t_*]$. Note also from (\ref{peq}) that
\[
\int_0^t p(s)ds = \log\left(\fr{p_0}{p(t)}\right)^{2\beta} = \log\left(1+ \fr{2p_0 t}{\beta}\right)^{\fr2\beta}.
\]
Defining $m(t)=\frac{1}{2} \|\theta(t)\|_{2p(t)}^{2p(t)}$ and using \eqref{peq} we have the differential inequality
\be
m'(t) \leq  p(t)(C_1 m(t) + C_2) \implies C_1 m(t)+ C_2 \leq (C_1 m_0 + C_2)\left (1+ \fr{2p_0 t}{\beta}\right)^{\fr{2C_1}{\beta}}
\ee
with $C_1$ and $C_2$ depending on $\|f\|_{L^\infty(0,T; L^\infty(\mathbb{T}^2))}$, $p_0$, $C_K$ and $\beta$.  
Thus
\[
m(t) \le m_0 \left(1+ \fr{2p_0 t}{\beta}\right)^{\fr{2C_1}{\beta}} + \fr{C_2}{C_1}\left[\left (1+ \fr{2p_0 t}{\beta}\right)^{\fr{2C_1}{\beta}} -1\right].
\]
Note that $ {p_0}/{p(t)}=  1 + 2p_0\beta^{-1} t$ is increasing on $[0,t_*]$ from $1$ to ${p_0}/{p(t_*)}=p_0$.
Consequently
\be
\|\theta(t)\|_{2p(t)} \leq C_1\|\theta_0\|_{2p_0}^{p_0}  + C_2
\ee
where the constants $C_1$ and $C_2$ have been redefined but the dependence on parameters is the same. As $p(t)\in[1,p_0]$ for all $t\in [0,t_*]$  we have that $\|\theta(t)\|_{2}\leq \|\theta(t)\|_{2p(t)}$ and we obtain
\be
\sup_{t\in [0,t_*]}   \|\theta(t)\|_{2}  \leq C_1 \|\theta_0\|_{2p_0}^{p_0} + C_2,
\ee which completes the proof. 
\end{proof}

A similar idea to our  Lemma \ref{scalalemm} was used in \cite{EJ17}, Lemma 3. We apply our two lemmas to the two dimensional linearized Euler and Navier-Stokes equations to obtain uniform boundedness of vorticity gradients for short time.
\begin{lemma}\label{lem} Fix $\ell>0$ and let  $\omega_\ell$ and $\omega_\ell^\nu$ solve \eqref{omell} and \eqref{omellnu} respectively.  Then there exists a constant $C_*$ and a constant $C_\ell<\infty$ depending only on $\ell$, the forcing norm  $\|g\|_{L^\infty(0,T;L^\infty(\mathbb{T}^2))}$, and the uniform bound on solutions given in (\ref{omegafty})  such that for $T_*\le (C_*  \Omega_{\infty})^{-1}$, we have that  
\be 
\sup_{t\in [0,T_*]} \left(\|\omega_\ell(t)\|_{H^1} + \|\omega_\ell^\nu(t)\|_{H^1}  \right)\leq C_\ell.
\ee
\end{lemma}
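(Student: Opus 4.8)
The plan is to apply Lemma \ref{scalalemm} with $\theta = |\nabla \omega_\ell|$ (and similarly $\theta = |\nabla \omega_\ell^\nu|$), which requires first deriving the differential inequality \eqref{scalareqn} for the gradient of a transported-and-forced scalar. Differentiating the linear transport equation \eqref{omell} in space, one gets $\partial_t \nabla\omega_\ell + u\cdot\nabla(\nabla\omega_\ell) = -(\nabla u)\cdot\nabla\omega_\ell + \nabla(\varphi_\ell * g)$, and taking the modulus yields the pointwise bound
\be
\partial_t |\nabla\omega_\ell| + u\cdot\nabla|\nabla\omega_\ell| \le |\nabla u|\,|\nabla\omega_\ell| + |\nabla(\varphi_\ell * g)|.
\ee
For the viscous equation \eqref{omellnu} the same computation produces the extra term $\nu\Delta\nabla\omega_\ell^\nu$, and using the Kato-type inequality $\text{sgn}(\nabla\omega_\ell^\nu)\cdot\Delta\nabla\omega_\ell^\nu \le \Delta|\nabla\omega_\ell^\nu|$ (valid in the sense of distributions) one obtains exactly \eqref{scalareqn} with $\nu > 0$, $f = |\nabla(\varphi_\ell * g)|$, and $\Omega_\infty$ the bound \eqref{omegafty} on $\|\omega(t)\|_{L^\infty}$ (resp. $\|\omega^\nu(t)\|_{L^\infty}$). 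Note both $u$ and $u^\nu$ have curl bounded by $\Omega_\infty$ uniformly, so the hypothesis \eqref{omegfty} of Lemma \ref{scalalemm} holds with the same constant in both cases.

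Next I would fix any $p_0 > 1$ (say $p_0 = 2$) and invoke Lemma \ref{scalalemm}: on the time interval $[0, T(p_0)]$ with $T(p_0) = \gamma(p_0-1)/(2p_0\Omega_\infty)$ we get
\be
\sup_{t\in[0,T(p_0)]} \|\nabla\omega_\ell(t)\|_{L^2(\mathbb{T}^2)} \le C_1 \|\nabla\omega_\ell(0)\|_{L^{2p_0}(\mathbb{T}^2)}^{p_0} + C_2,
\ee
and identically for $\omega_\ell^\nu$, with the same constants $C_1, C_2$ depending only on $p_0$, $\Omega_\infty$ and $\|f\|_{L^\infty_t L^\infty_x} \le \|\nabla\varphi_\ell\|_{L^1}\|g\|_{L^\infty_t L^\infty_x}$. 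The right-hand side is finite and depends only on $\ell$ (through $\|\nabla(\varphi_\ell * \omega_0)\|_{L^{2p_0}} \le \|\nabla\varphi_\ell\|_{L^{2p_0/(2p_0-1) \text{ or better}}} \cdot$ something, in any case $\le C(\ell)\Omega_\infty$ using Young's inequality and the uniform $L^\infty$ bound \eqref{supid} on $\omega_0^\nu$), the forcing norm, and $\Omega_\infty$ — crucially it is uniform in $\nu$. Combining with the trivial $L^2$ bound $\|\omega_\ell(t)\|_{L^2} \le \|\varphi_\ell * \omega_0\|_{L^2} + t\|\varphi_\ell * g\|_{L^\infty_t L^2_x}$ (and likewise for the viscous case, where the Laplacian term only helps), we get $\sup_{[0,T_*]}(\|\omega_\ell\|_{H^1} + \|\omega_\ell^\nu\|_{H^1}) \le C_\ell$ provided $T_* \le T(p_0) = \gamma(p_0-1)/(2p_0\Omega_\infty)$; setting $C_* := 2p_0/(\gamma(p_0-1))$ gives the stated $T_* \le (C_*\Omega_\infty)^{-1}$.

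The main obstacle — really the only subtle point — is justifying the Kato inequality for the viscous gradient equation and, more broadly, making the formal differentiation rigorous: $\omega_\ell^\nu$ is smooth for $t>0$ by parabolic regularity, so $\nabla\omega_\ell^\nu$ genuinely solves its equation and $|\nabla\omega_\ell^\nu|^{2p}$ can be differentiated in time, but the log-Lipschitz (not Lipschitz) nature of $u$ means $\nabla\omega_\ell$ in the inviscid case is only controlled through the exponential integrability of $\nabla u$ from Lemma \ref{explemm} rather than any $L^\infty$ bound — this is precisely why Lemma \ref{scalalemm}, with its shrinking exponent $p(t)$, is the right tool and no naive Grönwall argument works. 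A secondary technical point is that the estimate in Lemma \ref{scalalemm} is stated for nonnegative $\theta$ satisfying the \emph{scalar} inequality \eqref{scalareqn}, whereas $|\nabla\omega_\ell|$ satisfies it only in a weak/viscosity sense; one should remark that the proof of Lemma \ref{scalalemm} goes through verbatim for subsolutions, or alternatively run the energy estimate directly on $\int |\nabla\omega_\ell|^{2p(t)}$ using the pointwise gradient inequality, which is how I would phrase it to keep the argument self-contained.
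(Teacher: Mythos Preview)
Your proposal is correct and follows essentially the same approach as the paper: differentiate the linear transport equations, derive the scalar differential inequality \eqref{scalareqn} for $|\nabla\omega_\ell^\nu|$ (using the Kato-type inequality for the viscous term), and apply Lemma \ref{scalalemm} with a fixed exponent $p_0>1$ (the paper also takes $p=2$), bounding the mollified initial data and forcing in terms of $\ell$ and $\Omega_\infty$. Your additional remarks on the justification of the Kato inequality, the subsolution interpretation, and the explicit $L^2$ bound needed for the full $H^1$ norm are all appropriate refinements of points the paper treats as ``standard computation.''
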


\begin{proof}[Proof of Lemma \ref{lem}]
We focus on proving a viscosity independent bound for $ \|\omega_\ell^\nu(t)\|_{H^1}$.  The proof for $\|\omega_\ell(t)\|_{H^1}$ is the same, setting $\nu=0$. We show that $|\nabla \omega_\ell^\nu|$ obeys \eqref{scalareqn}. Differentiating  \eqref{omellnu}, we find
\be
(\partial_t + u^\nu\cdot \nabla)\nabla \omega_\ell^\nu+\nabla u^\nu \cdot \nabla \omega_\ell^\nu =\nu \Delta (\nabla \omega_\ell^\nu)+ \nabla (\varphi_\ell *g).
\ee
A standard computation shows that $|\nabla \omega_\ell^\nu|$ satisfies
\be
(\partial_t + u^\nu\cdot \nabla-\nu \Delta) |\nabla \omega_\ell^\nu| \leq |\nabla u| |\nabla \omega_\ell^\nu|+ | \nabla (\varphi_\ell *g)|
\ee
which is a particular case of the scalar inequality \eqref{scalareqn} with $\theta=|\nabla \omega_\ell^\nu|$, initial data $\theta_0=|\nabla (\varphi_\ell *{\omega}_0^\nu)|\in L^\infty(\mathbb{T}^2)$ and forcing $f= | \nabla (\varphi_\ell *g)|\in L^\infty(0,T;L^\infty(\mathbb{T}^2))$, as claimed.  Applying Lemma \ref{scalalemm}, we find that for any $p>1$ (e.g. $p=2$) we have
\begin{align}\nonumber
\sup_{t\in [0,T_*]}   \|\omega_\ell^\nu(t)\|_{H^1} 
&=C_1 \frac{1}{\ell^p}\left( \int_{\mathbb{T}^2} | {\omega}_0^\nu*(\nabla\varphi)_\ell |^{2p} \rmd x\right)^{1/2}+C_2\\ \label{finalbnd}
& \lesssim C_\ell \|{\omega}_0^\nu\|_{L^\infty(\mathbb{T}^2)}^p\lesssim C_\ell \Omega_{\infty}^p.
\end{align}
The constant $C_\ell$ depends on $\Omega_{\infty}$.  It diverges with the mollification scale $\ell$, through the prefactor ${\ell^{-p}}$ and through the dependence on  $\|\nabla(\varphi_\ell * g)\|_{L^\infty} \lesssim  \ell^{-1} \|g\|_{L^\infty}$.  The important point however is that \eqref{finalbnd} holds uniformly in viscosity, completing the proof.
\end{proof}

We return now now to the proof of the main theorem. Using Lemma \ref{lem}, the difference energy obeys
\begin{align} \nonumber
\frac{\rmd}{\rmd t} \|\omega_\ell^\nu- \omega_\ell\|_{L^2(\mathbb{T}^2)}^2 &= -\int_{\mathbb{T}^2} (u^\nu-u) \cdot \nabla \omega^\nu_\ell (\omega_\ell^\nu- \omega_\ell) \rmd x \\ \nonumber
&\qquad - \nu  \int_{\mathbb{T}^2}|\nabla \omega_\ell^\nu|^2 \rmd x +  \nu  \int_{\mathbb{T}^2} \nabla \omega_\ell^\nu\cdot \nabla \omega_\ell \rmd x\\ \nonumber
&\leq 4 \Omega \|u^\nu-u\|_{L^{2}} \| \nabla \omega_\ell^\nu\|_{L^{2}}  + \nu \| \nabla \omega_\ell^\nu\|_{L^2} \| \nabla \omega_\ell\|_{L^2} \\
&\lesssim C_\ell   \|u^\nu-u\|_{L^\infty(0,T;L^{2}(\mathbb{T}^2))} + \nu  C_\ell ^2.
\end{align}
Integrating we find
\be\label{omdiff}
\|\omega_\ell^\nu- \omega_\ell\|_{L^2}^2\lesssim \|\varphi_\ell*( {\omega}_0^\nu- {\omega}_0)\|_{L^2}^2+  C_\ell  T   \|u^\nu-u\|_{L^\infty(0,T;L^{2}(\mathbb{T}^2))} + \nu  C_\ell ^2 T .
\ee

To conclude the proof we must show that, at fixed $\ell>0$, we have $\lim_{\nu\to 0} \|\omega_\ell^\nu- \omega_\ell\|_{L^2(\mathbb{T}^2)}=0$.  
Recall that by our assumption \eqref{limidl2} we have that $\lim_{\nu \to 0} \|\omega^\nu_0 - \omega_0\|_{L^2(\mathbb{T}^2)}\to 0$. Thus we need only establish strong convergence of the velocity in $L^2(0,T;L^2(\mathbb{T}^2))$.   If $g=0$ and $u^\nu_0=u_0$, this is a consequence of Theorem 1.4 of \cite{Chemin}.  Below is a generalization of \cite{Chemin} which applies in our setting and is proved by a different argument.

\begin{lemma}{\la{chem}} 
Let $\omega_0\in L^{\infty}(\mathbb T^2)$. There exist constants $U$, $\Omega_2$ and $K$ (see below (\ref{U}), (\ref{omegap}), (\ref{K})) depending on norms of the initial data and of the forcing such that
the difference  $v = u^{\nu}-u$ of velocities of solutions (\ref{eu}) and (\ref{nse}) obeys
\be
\|v(t)\|_{L^2}^2 \le 3U^2K^{\fr{5(t-t_0)\Omega_{\infty}}{\gamma}}
\left (\fr{\|v(t_0)\|_{L^2(\mathbb T^2)}^2}{U^2} + \gamma \fr{\Omega_2^2}{U^2\Omega_{\infty}}\nu \right)^{1-\fr{5(t-t_0)\Omega_{\infty}}{\gamma}}
\la{vineq}
\ee
for all $0\le t_0\le  t $.
By iterating the above, we obtain
\be\label{vineqLT}
\|v(t)\|_{L^2}^2 \leq 20U^2K^{1-e^{-{10 t \Omega_{\infty}}/{\gamma}}}
\left (\fr{\|v(0)\|_{L^2(\mathbb T^2)}^2}{U^2} +  \gamma \fr{ \Omega_2^2}{U^2\Omega_{\infty}}\nu \right)^{e^{-\fr{10 t \Omega_{\infty}}{\gamma}}}
\ee 
provided that $\|v(0)\|_{L^2(\mathbb T^2)}^2+ {\gamma}\nu \Omega_2^2/{\Omega_{\infty}}\leq 9K U^2$.
\end{lemma}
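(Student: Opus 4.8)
The plan is to run a Yudovich-type $L^2$ energy estimate for the velocity difference $v=u^\nu-u$, in which the only dangerous term, $\int|\nabla u|\,|v|^2$, is controlled by the uniform exponential integrability of $\nabla u$ supplied by Lemma~\ref{explemm}. The sharp double–exponential-in-time bound \eqref{vineqLT} then comes out of optimizing the H\"older exponent and integrating the resulting Osgood-type differential inequality, with \eqref{vineq} its one-step (short-time) form.

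First I would pass to the velocity formulation: $u$ and $u^\nu$ solve the two-dimensional Euler and Navier--Stokes momentum equations with a common forcing whose curl is $g$, so the two velocity forcings differ only by a gradient. Subtracting the equations, $v$ solves
\[
\partial_t v+(u^\nu\cdot\nabla)v+(v\cdot\nabla)u+\nabla(p^\nu-p)=\nu\Delta u^\nu .
\]
Testing with $v$ and using $\div v=\div u^\nu=0$ annihilates the transport term, the pressure term, and the forcing term, leaving
\[
\tfrac12\tfrac{\rmd}{\rmd t}\|v\|_{L^2}^2=-\int_{\mathbb T^2}(v\cdot\nabla)u\cdot v\,\rmd x+\nu\int_{\mathbb T^2}\Delta u^\nu\cdot v\,\rmd x .
\]
For the viscous term I integrate by parts and use $\nu\int\Delta u^\nu\cdot u^\nu\le 0$ together with Cauchy--Schwarz to get $\nu\int\Delta u^\nu\cdot v\le\tfrac\nu2\|\nabla u\|_{L^2}^2=\tfrac\nu2\|\omega\|_{L^2}^2\le\tfrac\nu2\Omega_2^2$, where $\Omega_2$ is the uniform $L^2$ vorticity bound. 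For the nonlinear term, H\"older with exponents $p$, $p'=p/(p-1)$ gives $\int|\nabla u|\,|v|^2\le\|\nabla u\|_{L^p}\,\|v\|_{L^{2p'}}^2$; I then interpolate $\|v\|_{L^{2p'}}\le\|v\|_{L^2}^{1/p'}\|v\|_{L^\infty}^{1/p}$, bound $\|v\|_{L^\infty}$ by $2U$ with $U$ the uniform $L^\infty$ bound on the velocities, and use $\|\nabla u\|_{L^p}\le Cp\,\Omega_\infty/\gamma$ for $p\ge2$ (either directly from \eqref{BSbnd} or from the $L^p$-bounds implicit in Lemma~\ref{explemm}). Dividing through by $U^2$ and writing $Z=\|v\|_{L^2}^2/U^2$, $\mathcal N=\gamma\Omega_2^2\nu/(U^2\Omega_\infty)$, this yields, for \emph{every} $p\ge2$,
\[
\tfrac{\rmd Z}{\rmd t}\le \tfrac{\Omega_\infty}{\gamma}\bigl(Cp\,Z^{1-1/p}+\mathcal N\bigr).
\]

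Next, set $W=Z+\mathcal N$, a genuine constant shift since $U,\Omega_2,\Omega_\infty,\nu$ are time-independent. As long as $W\le 1$ one has $Z^{1-1/p}\le W^{1-1/p}$ and $\mathcal N\le W\le W^{1-1/p}$, so the inequality becomes $W'\le C\Omega_\infty\gamma^{-1}\,p\,W^{1-1/p}$ for all $p\ge 2$; taking the infimum over $p$ (the minimizer is $p\simeq\log(1/W)$) collapses it to the Osgood-type inequality $W'\le C\Omega_\infty\gamma^{-1}\,W\log(e/W)$. Integrating $\rmd\bigl(-\log\log(e/W)\bigr)/\rmd t\le C\Omega_\infty/\gamma$ from $t_0$ to $t$ gives $W(t)\le e^{1-\sigma}W(t_0)^{\sigma}$ with $\sigma=\exp(-C\Omega_\infty(t-t_0)/\gamma)$; bounding $1-\sigma\le C\Omega_\infty(t-t_0)/\gamma$ and $W(t_0)^\sigma\le W(t_0)^{\,1-C\Omega_\infty(t-t_0)/\gamma}$ (legitimate since $W(t_0)\le 1$) and reinstating the $U^2$ factor produces \eqref{vineq} after renaming the constant as $K$ and the rate as $5\Omega_\infty/\gamma$. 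Finally \eqref{vineqLT} follows by applying \eqref{vineq} on a partition of $[0,t]$ into intervals on which $5h\Omega_\infty/\gamma$ equals a fixed fraction $\lambda\in(0,1)$, re-absorbing the constant $\mathcal N$ at each junction, composing the resulting one-step bounds $W(t_{k+1})\le 4K^{\lambda}W(t_k)^{1-\lambda}$ so that the exponent on $W(0)$ becomes $(1-\lambda)^{t/h}=e^{-10t\Omega_\infty/\gamma}$ for a suitable $\lambda$, and summing the geometric series $\sum_k(1-\lambda)^k\le 1/\lambda$ of accumulated constants; the smallness hypothesis $\|v(0)\|_{L^2}^2+\gamma\nu\Omega_2^2/\Omega_\infty\le 9KU^2$ is precisely what guarantees that $W$ enters and remains in the regime where the viscous source $\mathcal N$ can be absorbed into the logarithmic nonlinearity, and the compounding of the linearization loss at each step is what doubles the rate relative to \eqref{vineq} and yields the prefactor $20U^2$.

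I expect the main difficulty to be bookkeeping rather than conceptual content. One must choose $U$, $\Omega_2$, $K$ so that all quantities are nondimensionalized consistently and so that the combined quantity $W$ stays in the range where $\mathcal N$ can be absorbed into the logarithmic nonlinearity --- this is where the smallness of $\nu$ enters --- while carrying the $p$-optimization uniformly in $t$ inside the differential inequality and tracking constants precisely enough to obtain the stated rates $5\Omega_\infty/\gamma$ and $10\Omega_\infty/\gamma$. A secondary, routine point is justifying the energy identity for the difference of a Yudovich solution and a Navier--Stokes solution; this is standard since $u^\nu$ is smooth for $t>0$ and $u\in C_tW^{1,p}_x$ for every finite $p$, so that $\nabla u\in L^p$ and the term $\int(v\cdot\nabla)u\cdot v$ is absolutely convergent.
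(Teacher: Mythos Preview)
Your proposal is essentially correct and reaches the same conclusion, but the route you take for the key step differs from the paper's in an interesting way.

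You control $\int|\nabla u|\,|v|^2$ by H\"older with a free exponent $p$, the Calder\'on--Zygmund growth $\|\nabla u\|_{L^p}\lesssim p\,\Omega_\infty$, and the interpolation $\|v\|_{L^{2p'}}\le \|v\|_{L^2}^{1/p'}\|v\|_{L^\infty}^{1/p}$, then optimize over $p$ \emph{inside} the differential inequality to land directly on the Osgood-type ODE $W'\lesssim W\log(e/W)$, which you integrate in one stroke. The paper instead splits the domain into the level set $B=\{|v|\ge MU\}$ and its complement: on $B$ it uses $|B|\le M^{-2}$ together with $L^4$ bounds (Ladyzhenskaya plus $\|\nabla u\|_{L^4}\le \Omega_4$), while on the complement it applies the Fenchel--Young inequality $ab\le e^a+b\ln b$ with $a=\beta|\nabla u|+\log\epsilon$ and the exponential integrability from Lemma~\ref{explemm}. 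This produces a \emph{linear} ODE $\beta y'\le F+\log(M^2/\epsilon)\,y$ with free parameters $M,\epsilon$, which is solved explicitly and only \emph{afterwards} optimized in $M,\epsilon$.

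What each buys: your argument is the classical Yudovich uniqueness scheme and is arguably more streamlined, but it requires a uniform $L^\infty$ bound on the velocities to close the interpolation; note also that you take $U$ to be that $L^\infty$ bound, whereas in the paper $U$ is defined in \eqref{U} as the $L^2$ velocity bound, so your constants will not literally coincide with the ones in the statement. The paper's splitting avoids any $L^\infty$ control on $v$---the contribution from large $|v|$ is handled purely through $\Omega_2$, $\Omega_4$ and the smallness of $|B|$---and yields an explicit formula \eqref{K} for $K$ in terms of $\Omega_2$, $\Omega_4$, $U$, $\beta$. The iteration in Step~2 (your partition argument) is carried out the same way in both; the paper sets $\Delta t=\gamma/(10\Omega_\infty)$ and reduces to the recursion $\delta_n\le\sqrt{\delta_{n-1}}+\tilde\nu$, whose telescoping via subadditivity of $\sqrt{\cdot}$ is exactly your ``summing the geometric series of accumulated constants''.
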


\begin{rem}[Continuity of Solution Map]
At zero viscosity, Lemma \ref{chem} establishes H\"{o}lder continuity of the Yudovich (velocity) solution map.  Specifically,  denoting $u_t:= S_t^v(u_0)$ and setting $\nu=0$, a consequence of  Lemma \ref{chem} is that $\| S_t^v(u_0)- S_t^v(u_0')\|_{L^2(\mathbb{T}^2)} \leq C \|u_0-u_0'\|_{L^2(\mathbb{T}^2)}^{\alpha(t)}$ where $\alpha(t):= e^{-ct} $  and $c, C>0$ are appropriate constants.  This fact is used to prove Corollary \ref{contcor}. {It is worth further remarking that the condition on the data  $\|v(0)\|_{L^2(\mathbb T^2)}^2 \leq 9K U^2$  required for the above estimate to hold is $O(1)$ (data need not be taken very close).}
\end{rem}

\begin{proof}[Proof of Lemma \ref{chem}]
The proof proceeds in two steps.
\\

\vspace{-3mm}
\noindent \textbf{Step 1: Short time bound.} 
The proof of the lemma starts from the equation obeyed by the difference
$v$,
\[
\pa_t v + u^{\nu}\cdot\na v + v\cdot\na u + \na p = \nu\Delta v + \nu \Delta u
\]
leading to the inequality
\be
\fr{d}{dt}\|v\|_{L^2}^2  + \nu \|\na v\|^2_{L^2} \le \nu \|\na u\|^2_{L^2} +
2 \int |\na u| |v|^2\rmd x
\la{env}
\ee
which is a straightforward consequence of the equation, using just integration by parts. 
We use the bound $\Omega_{\infty}$ (\ref{omegafty}) for the vorticity of the Euler solution. We also use a bound for the $L^2$ norms
\be
\sup_{0\le t\le T}\left(\|u^{\nu}(t)\|_{L^2(\mathbb T^2)} + \|u(t)\|_{L^2(\mathbb T^2)}\right)\le U,
\la{U}
\ee
which is easily obtained from energy balance. We use also bounds for $L^p$ norms of vorticity, 
\be
\Omega_p  = \sup_{0\le t\le T}\|\omega(t)\|_{L^p(\mathbb T^2)} \le \Omega_\infty.
\la{omegap}
\ee
We split the integral
\[
 \int |\na u| |v|^2  \rmd x = \int_B |\na u| |v|^2\rmd x + \int_{\mathbb T^2\setminus B} |\na u| |v|^2\rmd x
\]
where 
\[
B = \{ x\left |\right.\; |v(x,t)| \ge MU\}
\]
with $M$ to be determined below. Although $B$ depends in general on time, it has small measure if $M$ is large, 
\[
\left | B\right | \le M^{-2}.
\]
The constant $M$ has dimensions of inverse length.  We bound
\be\label{termbd}
2\int_B|\na u||v|^2\rmd x \leq  2 \| \na u\|_{L^2} \| v\|_{L^4}^2\leq 2 |B|^{\fr{1}{4}}\| \na u\|_{L^4} \|v(t)\|_{L^4}^2
\ee
where we used $\int_B|\na u|^2dx \le |B|^{\fr{1}{2}}\| \na u\|_{L^4}^2$.
We now use the fact that we are in Yudovich class and Ladyzhenskaya inequality to deduce
\[
\|v(t)\|_{L^4}^2 \le C\|v(t)\|_{L^2}[\|\omega_0\|_{L^2} + \|g\|_{L^1(0,T; L^2)}] \le CU\Omega_2
\]
and we use also
\[
\|\na u\|_{L^4}\le [C\|\omega_0\|_{L^4} + \|g\|_{L^1(0,T; L^4)}] = \Omega_4
\]
to bound \eqref{termbd} by 
\be
2\int_B|\na u||v|^2\rmd x \le CU\Omega_2\Omega_4M^{-\fr{1}{2}},
\la{smallb}
\ee
We nondimensionalize by dividing by $U^2$ and we multiply by $\beta = {\gamma}/{\Omega_{\infty}}$.
The quantity 
\be
y(t) = \fr{\|v(t)\|_{L^2{(\mathbb T^2)}}^2}{U^2}
\la{y}
\ee
obeys the inequality
\be
\beta\fr{\rmd y}{\rmd t} \le \beta\nu\fr{\Omega_2^2}{U^2} + C\beta\Omega_4\fr{\Omega_2}{U}M^{-\fr{1}{2}} + 2\int_{\mathbb T^2\setminus B} \beta|\na u|\frac{|v|^2}{U^2}\rmd x.
\la{intery}
\ee
We write the term
\be
2\int_{\mathbb T^2\setminus B} \beta |\na u| |v|^2 U^{-2} \rmd x = 2\int_{\mathbb T^2\setminus B}(\beta|\na u| + \log\epsilon + \log\fr{1}{\epsilon})|v|^2 U^{-2} \rmd x\ee
with $\epsilon$ (with units of inverse area) to be determined below.  We use the inequality (\ref{elemIq}) and Lemma \ref{explemm} with
\[
a = \beta |\na u| + \log\epsilon, \qquad b= \fr{|v|^2}{U^2}
\]
 to deduce
\be
2\int_{\mathbb T^2\setminus B} \beta |\na u| |v|^2 U^{-2}\rmd x\le  2\epsilon C_K + 2\log\fr{M^2}{\epsilon} y(t).
\la{largeb}
\ee
Inserting (\ref{largeb}) in (\ref{intery}) we obtain
\be
\beta\fr{\rmd y}{\rmd t} \le F + \log\left(\fr{M^2}{\epsilon}\right )y(t)
\la{yneqdt}
\ee
with
\be
F =  \beta\nu\fr{\Omega_2^2}{U^2} + C\beta\Omega_4\fr{\Omega_2}{U}M^{-\fr{1}{2}} +  2\epsilon C_K.
\la{f}
\ee
Note that $F$ and $\fr{M^2}{\epsilon}$ are nondimensional.
From (\ref{yneqdt}) we obtain immediately
\be
y(t) \le \left(\fr{M^2}{\epsilon}\right)^{\fr{t-t_0}{\beta}} y(t_0) + \fr{F}{\log\left(\fr{M^2}{\epsilon}\right )}\left(\left(\fr{M^2}{\epsilon}\right)^{\fr{t-t_0}{\beta}} -1\right).
\la{yneq}
\ee
We choose $M$ such that
\be
 C\beta\Omega_4\fr{\Omega_2}{U}M^{-\fr{1}{2}} = \beta\nu\fr{\Omega_2^2}{U^2} + y(t_0)
\la{Mchoice}
\ee
and  we choose $\epsilon$ such that
\be
 2\epsilon C_K = \beta\nu\fr{\Omega_2^2}{U^2} + y(t_0).
\la{epsilonchoice}
\ee
These choices imply
\be
F = 3\beta\nu\fr{\Omega_2^2}{U^2} + 2y(t_0).
\la{fexplicit}
\ee
Then we see that
\be
\Gamma = \fr{M^2}{\epsilon} = 2C_K\left(C\beta\Omega_4\fr{\Omega_2}{U}\right)^4\times \left (\beta\nu\fr{\Omega_2^2}{U^2} + y(t_0)\right)^{-5}.
\la{Gamma}
\ee
Taking without loss of generality $\log \Gamma \ge 1$, we have  from (\ref{yneq}) 
\begin{align}\nonumber
y(t) &\le 3\left (y(t_0) + \beta \nu \fr{\Omega_2^2}{U^2}\right)\Gamma^{\fr{t-t_0}{\beta}}\\
& \le 3\left (y(t_0) + \beta \nu \fr{\Omega_2^2}{U^2}\right)^{1-\fr{5(t-t_0)}{\beta}}\times \left(2C_K\left(C\beta\Omega_4\fr{\Omega_2}{U}\right)^4\right)^{\fr{5(t-t_0)}{\beta}}.
\la{ygamma}
\end{align}
Recalling that $\beta = {\gamma}/{\Omega_{\infty}}$ and
denoting the nondimensional constant
\be
K = 2C_K\left(C\beta\Omega_4\fr{\Omega_2}{U}\right)^4
\la{K}
\ee
we established 
\be
\fr{\|v(t)\|^2}{U^2} \le 3 K^{\fr{5(t-t_0)\Omega_{\infty}}{\gamma}}\left (\fr{\|v(t_0)\|_{L^2(\mathbb T^2)}^2}{U^2} + \beta \nu \fr{\Omega_2^2}{U^2}\right)^{1-\fr{5(t-t_0)\Omega_{\infty}}{\gamma}}.
\la{vbound}
\ee
Thus, we established \eqref{vineq}.
\\

\noindent \textbf{Step 2: Long time bound.}
With  \eqref{vineq} established, we now prove \eqref{vineqLT}.  Let  $c=  {5\Omega_{\infty}}/{\gamma}$, $\Delta t= 1/2c$ and  $t_i = t_{i-1}+\Delta t$ and $a_i = \|v(t_i)\|_{L^2}^2/U^2$  for $i\in \mathbb{N}$.
Then \eqref{vineq} states
\be
a_i \leq C_1 \left(a_{i-1} + C_2\nu \right)^{1/2}, \qquad i=1, 2, \dots
\la{aineq}
\ee
with $C_1= 3K^{\fr{5\Omega_{\infty}}{2c\gamma}}=3K^{\fr{1}{2}}$ and $C_2=  \beta \fr{\Omega_2^2}{U^2}$. 
We set
\be
\delta_n = \fr{a_i+ C_2\nu}{C_1^2}
\la{deltan}
\ee
and observe that (\ref{aineq}) is
\be
\delta_n \le \sqrt{\delta_{n-1}} + \widetilde{\nu}
\la{deltanineq}
\ee
where 
\be
\widetilde{\nu} = \fr{C_2\nu}{C_1^2}
\la{widetlidenu}
\ee
is a nondimensional inverse Reynolds number. It follows then by induction that
\be
\delta_n \le (\delta_0)^{2^{-n}} + \sum_{i=0}^{n-1}(\widetilde{\nu})^{2^{-i}}.
\la{deltanind}
\ee
Indeed, the induction step follows from 
\be
\delta_{n+1} \le \sqrt{\delta_n} + \widetilde{\nu}
\la{indstep}
\ee
and the subadditivity of $\lambda\mapsto \sqrt{\lambda}$. If 
\be
\widetilde{\nu} \le \fr{1}{\sqrt{5}-1}
\la{widetildecond}
\ee
then the iteration (\ref{deltanineq}) starting from $0<\delta_0< r$ where $r$ is the positive root of the equation $x^2-x-\widetilde{\nu} = 0$, remains in the interval $(0,r)$, and for any  $n$, $\delta_n$ obeys (\ref{deltanind}). We observe that 
\be
 \sum_{i=0}^{n-1}(\widetilde{\nu})^{2^{-i}}
= (\widetilde{\nu})^{2^{-n+1}}\left (1 + \cdots + (\widetilde{\nu})^{2^{n-1}}\right)\le \fr{1}{1-\widetilde{\nu}}(\widetilde\nu)^{2^{-n+1}}
\la{smallnutilde}
\ee
and therefore (\ref{vineqLT}) follows from (\ref{deltanind}). 
We note that the iteration defined with equality in (\ref{deltanineq}) converges as $n\to\infty$ to $r$. {Fixing any $t>0$ and letting $n= \lceil t/\Delta t\rceil = \lceil 2ct\rceil = \lceil{10 t \Omega_{\infty}}/{\gamma}\rceil$ establishes the bound.}
\end{proof}

Due to assumption \eqref{limidl2} we have that $\lim_{\nu \to 0} \|u^\nu_0 - u_0\|_{L^2(\mathbb{T}^2)}\to 0$.  Lemma \ref{chem} then allows us to conclude from \eqref{omdiff} that
$\lim_{\nu \to 0}\sup_{t\in [0,T_*]} \|\omega_\ell^\nu- \omega_\ell\|_{L^2(\mathbb{T}^2)}\to0$ at fixed $\ell>0$ and the proof of proposition \ref{prop} is complete.
\end{proof}

With the Proposition proved, the proof of the strong convergence of the vorticity in $L^p$ statement in the theorem is established. To obtain convergence of the distribution functions, see Thm 3.6 in \cite{CW2}.
\end{proof}

\begin{proof}[Proof of Proposition \ref{propReg}]
This proof makes use of the the stochastic Lagrangian representation for Navier--Stokes solutions \cite{CI08}, together with the uniform--in--$\nu$ boundedness of vorticity.  
In light of the Lagrangian representation \eqref{traj}, \eqref{stochRep}, the key ingredient of propagating some degree of fractional regularity on the vorticity is the (uniform) H\"{o}lder regularity of the  inverse flow $A_t$.
Since the diffusion coefficients on the additive noise on \eqref{traj} are spatially constant, it follows that the results of Chapter 3 of \cite{BCD11} hold realization-by-realization for the stochastic flow $X_t$ and its inverse $A_t$, uniformly in viscosity. 
 This gives uniform bounds on the separation of two trajectories driven by the same realization of Brownian noise, independent of viscosity, thereby establishing spatial H\"{o}lder regularity of the flow. Although straightforward, we include a proof of this statement for completeness.
 \begin{prop}\label{flowreg}
 There exists a unique measure-preserving stochastic flow of homeomorphisms solving \eqref{traj}. This flow and the back-to-labels map are continuous flows $X, A$ which for all $t\in [0,T]$ are uniformly--in--$\nu$ of the class $C^{\alpha(t)}(\mathbb{T}^2)$ with $\alpha(t)= \exp(-Ct/\beta)$ with constants defined in \eqref{loglip}.
 \end{prop}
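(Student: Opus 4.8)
The plan is to establish existence, uniqueness, measure preservation, and the stated H\"older regularity of the stochastic flow $X_t$ and its inverse $A_t$ by working realization-by-realization in the Brownian path $\omega \in C([0,T];\mathbb{T}^2)$ (abusing notation for the Wiener variable). First I would recall from Lemma \ref{explemm} (applied with $\omega^\nu(t)$ in place of $\omega$, using the uniform bound $\Omega_\infty$) that $\nabla u^\nu$ is exponentially integrable uniformly in $\nu$, and deduce the uniform log-Lipschitz estimate
\be\label{loglip}
|u^\nu(x,t)-u^\nu(y,t)| \leq C\,\Omega_\infty\, |x-y|\,\log\!\frac{e}{|x-y|}, \qquad |x-y|\leq 1,
\ee
with $C$ universal; this is the standard consequence of an $L^\infty$ bound on vorticity in two dimensions via the Biot--Savart law, and its constant does not see the viscosity. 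Then, for a fixed Brownian realization, the SDE \eqref{traj} becomes the ODE $\frac{d}{dt}(X_t - \sqrt{2\nu}\,W_t) = u^\nu(X_t,t)$, i.e. an ODE for $Y_t := X_t - \sqrt{2\nu}\,W_t$ with a drift that is log-Lipschitz in the spatial variable (the time-dependent shift by the continuous path $\sqrt{2\nu}W_t$ does not affect the spatial modulus of continuity). Existence and uniqueness of $Y_t$, hence of $X_t$, then follow from the classical Osgood/Osgood--Yudovich uniqueness theory for ODEs with log-Lipschitz vector fields, exactly as in Chapter 3 of \cite{BCD11}; measure preservation of $X_t$ and of $A_t=X_t^{-1}$ follows from $\div u^\nu = 0$ and the fact that the Brownian shift is a rigid translation (Jacobian one), applied pathwise and then retained after taking expectations.

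The core estimate is the pathwise control of the separation of two trajectories. Fix a realization and two initial points $x,y$; set $Z_t = X_t(x)-X_t(y)$, which satisfies $\frac{d}{dt}Z_t = u^\nu(X_t(x),t) - u^\nu(X_t(y),t)$ since the noise cancels in the difference. Hence $\frac{d}{dt}|Z_t| \leq C\Omega_\infty |Z_t|\log(e/|Z_t|)$ by \eqref{loglip} as long as $|Z_t|\leq 1$, and integrating this Osgood inequality gives
\be\label{sep}
|X_t(x)-X_t(y)| \leq \big(e\,|x-y|\big)^{\exp(-C\Omega_\infty t)} = \big(e\,|x-y|\big)^{\exp(-Ct/\beta)},
\ee
recalling $\beta = \gamma/\Omega_\infty$. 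This holds for every realization with the same deterministic constant, so it is uniform in $\nu$; it immediately yields that $x\mapsto X_t(x)$ is $C^{\alpha(t)}$ with $\alpha(t) = \exp(-Ct/\beta)$, pathwise and hence (after, e.g., taking a continuous modification) for the flow. The same Osgood argument run backward in time — or applied to the flow generated by $-u^\nu(\cdot, t-s)$ together with the time-reversed Brownian increments, which is again a log-Lipschitz ODE with the same constant — gives the identical H\"older modulus for the back-to-labels map $A_t$, and in particular shows $X_t, A_t$ are homeomorphisms (injectivity comes from uniqueness, i.e. from \eqref{sep} with the roles reversed forcing $|x-y|\leq (e^{-1})|A_t x - A_t y|^{1/\alpha(t)}$, so distinct images have distinct preimages).

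The main obstacle — really the only subtle point — is justifying that all of this genuinely holds \emph{realization-by-realization} and that the resulting objects assemble into a bona fide stochastic flow: one must check that the drift $u^\nu(\cdot,t)$ is regular enough in $(x,t)$ jointly (it is, being the Biot--Savart transform of the smooth-in-$x$, continuous-in-$t$ Navier--Stokes vorticity $\omega^\nu$, which for $\nu>0$ and $t>0$ is in fact $C^\infty$ in space), so that the pathwise ODE theory of \cite{BCD11} applies verbatim with the additive noise reduced to a continuous time-dependent translation; and one must note that the estimates \eqref{loglip}, \eqref{sep} involve only $\Omega_\infty$ and universal constants, never $\nu$, which is what gives the uniformity. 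Everything else — existence/uniqueness via Osgood, volume preservation via incompressibility, the homeomorphism property via two-sided uniqueness — is then a routine transcription of the classical deterministic arguments, which is why the claim is "straightforward"; I would state \eqref{loglip} and \eqref{sep} and then refer to Chapter 3 of \cite{BCD11} for the remaining bookkeeping.
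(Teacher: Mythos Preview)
Your proposal is correct and follows essentially the same approach as the paper: both exploit that the additive noise is spatially constant so that the difference $X_t(x)-X_t(y)$ obeys a deterministic ODE governed by the log-Lipschitz bound \eqref{loglip}, and both integrate the resulting Osgood inequality to obtain the H\"older exponent $\alpha(t)=\exp(-Ct/\beta)$, then repeat the argument backward in time for $A_t$. The only cosmetic differences are that the paper phrases the noise cancellation as ``the difference has no martingale part'' rather than via your substitution $Y_t=X_t-\sqrt{2\nu}W_t$, and it treats $A_t$ more explicitly as the solution of a backward It\^o equation (citing Kunita) rather than as the flow of the time-reversed field; the content is the same.
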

 \begin{proof}[Proof of Proposition \ref{flowreg}]
 We employ the log-Lipshitz property of $u^\nu$, i.e. there exists an absolute constant $C>2$ such that one has the following uniform-in-viscosity estimate
 \be\label{loglip}
 |u^\nu(x,t)-u^\nu(y,t)|\leq   \frac{C}{\beta} d(x,y) \ln\left(\frac{CC_K}{ d(x,y)^2}\right), \qquad \forall x,y\in \mathbb{T}^2,
 \ee
where $\beta$ and $C_K$ are the constants in Lemma \ref{explemm} which depend only on $\|\omega_0\|_{L^\infty}$.   See Lemma A.1 of \cite{BN19}. Here  $d(x,y) := \min \{ |x-y-k| \ : \ k\in \mathbb{Z}^d, \ |k|\leq 2\}$ is the geodesic distance on the torus upon the identification  $\mathbb{T}^d=[0,1)^d$.   
Now, due to the spatial uniformity of the noise on the trajectories 
\be
\rmd X_t(x) = u(X_t(x),t)\rmd t+\sqrt{2\nu}\ \rmd W_t , \qquad X_0(x)=x,
\ee
 we find that the difference  has no martingale part and satisfies
\be
 \rmd\left(X_t(x)- X_t(y)\right) =\left( u^\nu(X_t(x),t)- u^\nu(X_t(y),t)\right)\rmd t.
\ee
Upon integration, we obtain the inequality
\begin{align}\nonumber
d\left(X_t(x),X_t(y)\right) &\leq d(x,y) +\frac{C}{\beta} \int_0^t d\left(X_s(x),X_s(y)\right) \ln\left(\frac{CC_K}{ d\left(X_s(x),X_s(y)\right)^2}\right)\rmd s,  \label{holdertraj}
\end{align}
for all $x,y\in \mathbb{T}^2$.
The solution of this integro-inequality  (with a possibly larger constant $C$) is
\be\label{ineqX}
 d\left(X_t(x),X_t(y)\right)  \leq (C C_K)^{1+ e^{-Ct/\beta}} d(x,y)^{ e^{-Ct/\beta}} \quad \text{a.s.}.
\ee
  Since the bound holds almost surely, this says that the map $X_t(\cdot)$ is  H\"{o}lder continuous $C^{\alpha(t)}(\mathbb{T}^2)$ with  $\alpha(t)= e^{-Ct/\beta}$ as claimed.  We remark that deterministic trajectories in a log-Lipshitz field satisfying \eqref{loglip} satisfy precisely the same upper bound \eqref{ineqX}.
 
 To obtain H\"{o}lder regularity of the back-to-labels map, it suffices to note that $A_t$ can be identified with the backwards flow $X_{t,0}$ which solves the following backward stochastic differential equation 
 \be
 \hat{\rmd} X_{t,s}(x) = u(X_{t,s}(x) ,s) \rmd s +\sqrt{2\nu} \ \hat{\rmd}  \widehat{W}_s, \qquad X_{t,t}(x)=x,
 \ee
 where the $ \hat{\rmd} $ indicates the the backward differential and $\widehat{W}_s=W_{t-s}- W_t$ is a Brownian motion adapted to the backward filtration $\hat{\mathcal{F}}_s^t:= \sigma\{ \widehat{W}_u, u\in [0,s]\}$. For a discussion of backward It$\bar{{\rm o}}$ equations, see e.g. \cite{Kunita}. With this identification, one finds as above that for any $t>0$ and all $s\in [0,t]$ one has
\be\label{ineqA}
 d\left(X_{t,s}(x),X_{t,s}(y)\right)  \leq (C C_K)^{1+ e^{-C(t-s)/\beta}} d(x,y)^{ e^{-C(t-s)/\beta}} \quad \text{a.s.}.
\ee
By setting $s=0$ we find that $A_t=X_{t,0}$ satisfies the same estimate  \eqref{ineqX} as $X_t$ and therefore is H\"{o}lder continuous with the same exponentially decaying exponent.
 \end{proof}

Proceeding forward to obtain uniform bounds we wish to make use of the representation formula  \eqref{traj}, \eqref{stochRep}.  This requires some regularity on the initial condition, so we replace $\omega_0\in L^\infty(\mathbb{T}^2)$ with a mollification of it $\omega_0*\varphi_\ell \in C^\infty (\mathbb{T}^2)$ for $\ell>0$.  All the bounds will be manifestly independent of $\ell$ which can be taken to zero at the end, so we  simplify the notation by writing ``$\omega_0$".

 We continue by following closely the proof of Theorem 3.32 of \cite{BCD11}.  In particular, we introduce the space $F_p^s(\mathbb{T}^d)$ (which belongs to the family of Triebel--Lizorkin spaces $F_p^s= F_{p,\infty}^{s}$ provided $p>1$) that is comprised of measurable functions $f\in L^p(\mathbb{T}^d)$ which are finite in the seminorm
  \begin{align}\nonumber
 [f]_{F_p^s} := \inf_{g\in L^p(\mathbb{T}^d) } \Big\{ \|g\|_{L^p(\mathbb{T}^d)} \ : &\ \ |f(x)-f(y)|\leq d(x,y)^\alpha(g(x)+g(y)),\\
 &\qquad\qquad\qquad\qquad \qquad  \ \forall\  x,y\in \mathbb{T}^d\Big\}<\infty\label{Fseminorm}
 \end{align}
 where $d(x,y)$ is the distance function on the torus defined above. See Definition 3.30 of \cite{BCD11}.
 The key of the argument is to understand how composition with a (uniformly) H\"older continuous stochastic diffeomorphism provided by Prop. \ref{flowreg} operate on $F_p^s$.  Using the stochastic representation \eqref{stochRep},
 \be 
\omega^\nu(t) = \mathbb{E}\left[\omega_0\circ A_t \right],
\ee
 Jensen's inequality, H\"older continuity of the back-to-labels map and the fact that $\omega_0\in F_p^s$ we have
 \begin{align}\nonumber
 \frac{|\omega^\nu(x,t)-\omega^\nu(y,t)|}{d(x,y)^{s\alpha}} &=  \frac{|\mathbb{E}[\omega_0(A_t(x))-\omega_0(A_t(y)) ]|}{d(x,y)^{s\alpha}}\\ \nonumber
 &\leq  \mathbb{E}\left[ \frac{|\omega_0(A_t(x))-\omega_0(A_t(y))|}{d(x,y)^{s\alpha}} \right] \\ \nonumber
 &=  \mathbb{E}\left[ \frac{|\omega_0(A_t(x))-\omega_0(A_t(y))|}{d\left(A_t(x),A_t(y)\right)^s} \frac{d\left(A_t(x),A_t(y)\right)^s}{d(x,y)^{s\alpha}} \right]\\  \label{Finewt}
 &\leq \|A_t\|_{C^{\alpha}}^s \mathbb{E} [g(A_t(x))+g(A_t(y))]
 \end{align}
 for any $g\in L^p(\mathbb{T}^2)$, where we used that  $\omega_0\in F_p^s$ together with the definition \eqref{Fseminorm}.  Letting $ \tilde{g}(x):= \mathbb{E} [g(A_t(x))]$.  Note that, since $A_t$ is measure preserving and Jensen's inequality, we have $\| \tilde{g}\|_{L^p(\mathbb{T}^2)}\leq \| {g}\|_{L^p(\mathbb{T}^2)}<\infty$.  Thus $\tilde{g}\in L^p(\mathbb{T}^2)$ and it follows  by linearity of the expectation that the the right-hand-side of \eqref{Finewt} is a $L^p$ function. This shows
 \be\label{Fbnd}
[\omega^\nu(t)]_{F_p^{s(t)}} \leq (C K)^{s(1+ e^{-Ct/\beta})} [\omega_0]_{F_p^{s}}, \qquad s(t)= s \exp(-Ct/\beta)
 \ee
where we used the explicit bound on H\"{o}lder norm computed in \eqref{ineqX}.  The bound \eqref{Fbnd} holds uniformly in viscosity.  In order to connect to some $B^{s}_{p,\infty}$ (which is a
larger space) we need to use an embedding for the initial data
\be
B^{s_3}_{p,\infty} \subset B^{s_2}_{p,1}\subset W^{s_2,p}\subset F^{s_1}_p
\ee
with $s_3>s_2>s_1$. 
 The proposition follows from Lemma 3.31 of \cite{BCD11}, which shows that the Triebel--Lizorkin spaces are continuously embedded in the Besov spaces, i.e. $F_p^s(\mathbb{T}^d) \hookrightarrow B_{p,\infty}^{s}(\mathbb{T}^d)$.

 \end{proof}

\begin{proof}[Proof of Corollary \ref{cor}]
We need the following elementary Lemma
\begin{lemma}\label{lemma}
For any $s>0$ and $f\in B_{2,\infty}^{s}(\mathbb{T}^d)$, the following inequality holds for all $0<s'<s$
\be\label{besovineq}
\|f\|_{L^2(\mathbb{T}^d)}  \leq \|f\|_{H^{-1}(\mathbb{T}^d)}^{s'/(1+s')}\|f\|_{B^{s}_{2,\infty} (\mathbb{T}^d)}^{1/(1+s')}.
\ee
\end{lemma}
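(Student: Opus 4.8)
Inequality~\eqref{besovineq} is a Sobolev interpolation estimate, and the plan is to factor it through the fractional Sobolev scale in two short steps: a loss-free interpolation between $\dot{H}^{-1}$ and $\dot{H}^{s'}$, followed by the embedding $B^s_{2,\infty}\hookrightarrow \dot{H}^{s'}$ valid for every $0<s'<s$. Since the lemma is only applied to differences of vorticities, which have zero mean on $\mathbb{T}^2$, it suffices to treat mean-zero $f$; then the homogeneous and inhomogeneous norms agree up to harmless constants and I may expand $f=\sum_{k\ne 0}\hat f(k)e^{2\pi i k\cdot x}$, with $\|f\|_{\dot{H}^{\sigma}}^2=\sum_{k\ne 0}|k|^{2\sigma}|\hat f(k)|^2$.

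\textbf{Step 1 (loss-free interpolation).} With $\theta:=\tfrac1{1+s'}\in(0,1)$, I would use the frequency-by-frequency factorization
\be
|\hat f(k)|^2=\bigl(|k|^{-2}|\hat f(k)|^2\bigr)^{1-\theta}\bigl(|k|^{2s'}|\hat f(k)|^2\bigr)^{\theta},
\ee
which is consistent because $-2(1-\theta)+2s'\theta=0$; summing over $k\ne 0$ and applying H\"older's inequality with conjugate exponents $\tfrac1{1-\theta}$ and $\tfrac1\theta$ then gives, with constant one,
\be
\|f\|_{L^2(\mathbb{T}^d)}\le\|f\|_{\dot{H}^{-1}(\mathbb{T}^d)}^{\,s'/(1+s')}\,\|f\|_{\dot{H}^{s'}(\mathbb{T}^d)}^{\,1/(1+s')}.
\ee

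\textbf{Step 2 ($B^s_{2,\infty}$ controls $\dot{H}^{s'}$ for $s'<s$).} Writing $f=\sum_{j\ge 0}f_j$ for the Littlewood--Paley (equivalently, dyadic Fourier-annulus) decomposition, the definition of the Besov norm yields $\|f_j\|_{L^2}\le 2^{-js}\|f\|_{B^s_{2,\infty}}$, while $\|f\|_{\dot{H}^{s'}}^2$ is comparable to $\sum_{j\ge0}2^{2js'}\|f_j\|_{L^2}^2$. Therefore
\be
\|f\|_{\dot{H}^{s'}(\mathbb{T}^d)}^2\lesssim\|f\|_{B^s_{2,\infty}(\mathbb{T}^d)}^2\sum_{j\ge0}2^{-2j(s-s')}=\frac{\|f\|_{B^s_{2,\infty}(\mathbb{T}^d)}^2}{1-2^{-2(s-s')}},
\ee
the series converging precisely because $s'<s$; substituting into Step~1 gives~\eqref{besovineq}. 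An equivalent one-shot argument avoiding Littlewood--Paley theory is to split the Fourier series at a radius $R>0$, bound the low-frequency part by $R^2\|f\|_{\dot{H}^{-1}}^2$ and the high-frequency part by $\sum_{2^j\gtrsim R}2^{-2js}\|f\|_{B^s_{2,\infty}}^2\lesssim_s R^{-2s}\|f\|_{B^s_{2,\infty}}^2$, and then optimize over $R$.

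\textbf{Where the difficulty lies.} There is essentially no obstacle in this lemma; the only structural point worth noting is the strict inequality $s'<s$, which is genuinely needed in Step~2 because $\sum_j 2^{-2j(s-s')}$ diverges at $s'=s$ --- a reflection of the fact that $B^s_{2,\infty}$ is strictly larger than $H^s$ and does not embed into it. The argument delivers the estimate up to an implied constant depending only on $s$ and $s'$; this is the constant absorbed by the symbol $\lesssim$ when the lemma is invoked in Corollary~\ref{cor}.
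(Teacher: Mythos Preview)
Your proposal is correct and follows essentially the same two-step route as the paper: first the Sobolev interpolation $\|f\|_{L^2}\le \|f\|_{H^{-1}}^{s'/(1+s')}\|f\|_{H^{s'}}^{1/(1+s')}$ via H\"older on the Fourier side, then the embedding $B^s_{2,\infty}\hookrightarrow H^{s'}$ for $s'<s$. The only cosmetic differences are that you work with homogeneous norms (justified by the mean-zero reduction) and you prove the Besov embedding directly via Littlewood--Paley rather than citing \cite{Treibel}.
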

\begin{proof}
First note that the interpolation inequality 
\be\nonumber
\|f\|_{L^2(\mathbb{T}^d)}  \leq \|f\|_{H^{-1}(\mathbb{T}^d)}^{s'/(1+s')}\|f\|_{H^{s'} (\mathbb{T}^d)}^{1/(1+s')}
\ee
 which follows from Holder inequality and the Fourier definition of the Sobolev norm. The claim follows from the embedding $B_{p, q}^{s}(\mathbb{T}^d) \subset B_{p,q'}^{s'}(\mathbb{T}^d)$ for $s'<s$ and any $q',q$ (see \S 2.3.2 of \cite{Treibel}) and the identification  $H^{s}:= B^{s}_{2,2}$.
\end{proof}
Proceeding with the proof, applying Lemma \ref{lemma} for all $t\in [0,T]$ we have
\begin{align*}
\| \omega^\nu(t) -\omega(t)\|_{L^2(\mathbb{T}^2)} &\leq 
\|\omega^\nu(t) -\omega(t)\|_{H^{-1}(\mathbb{T}^2)}^{\frac{s'}{1+s'} } \| \omega^\nu(t) -\omega(t)\|_{B_{2,\infty}^{s(t)}(\mathbb{T}^2)}^{\frac{1}{1+s'} } \\
&\lesssim \sup_{t\in [0,T]} \|u^\nu(t)-u(t)\|_{L^2(\mathbb{T}^2)}^{\frac{s(t)}{1+s(t)}- } 
\end{align*}
 for any $s'< s(t):= {s} \exp(-CT\|\omega_0\|_{\infty})$. 
In the above, we appealed to Proposition \ref{propReg} to establish uniform--in--$\nu$ boundedness of the solution $\omega^\nu$ in the space  $L^\infty(0,t; B_{2,\infty}^{s(t)}(\mathbb{T}^2))$.  We now use Lemma \ref{chem} to conclude
\begin{align}\nonumber
\|\omega^\nu(t) -\omega(t)\|_{L^p(\mathbb{T}^2)} &\leq \|\omega^\nu -\omega\|_{L^\infty(0,T;L^\infty(\mathbb{T}^2))}^{\frac{p-2}{p}}\|\omega^\nu(t) -\omega(t)\|_{L^2(\mathbb{T}^2)}^{\frac{2}{p}}\\
&  \lesssim \sup_{t\in [0,T]} \|u^\nu(t)-u(t)\|_{L^2(\mathbb{T}^2)}^{\frac{2s(t)}{p(1+s(t))}- }\lesssim  (\nu T)^{\frac{ s\exp(-2CT\|\omega_0\|_{\infty})}{p(1+ s \exp(-CT\|\omega_0\|_{\infty}))} -}.
\end{align}
This completes our proof.
\end{proof}

\begin{rem}
The stochastic Lagrangian representation of the vorticity offers also an expression for the enstrophy dissipation as the variance of the (randomly sampled) initial data
\be
\nu \int_0^t \int_{\mathbb{T}^2} |\nabla \omega^\nu (t',x)|^2 \rmd x \rmd t' = \frac{1}{2} \int_{\mathbb{T}^2}{\rm Var} \left[\omega_0^\nu(A_t(x)) \right] \rmd x. 
\ee
The above is a special case of the Lagrangian fluctuation dissipation relation for active scalars derived in \cite{DE17}.
This relation is easily generalized to incorporate the effect of body forces.  A consequence of our Theorem \ref{Thm} is that the enstrophy dissipation vanishes in the high Reynolds number limit, forcing also the variance to become zero.  Thus, there is no ``spontaneous stochasticity" of Lagrangian trajectories in the vanishing viscosity limit for 2d Navier-Stokes with initial data in the Yudovich class.
\end{rem}

\section{Discussion}

Predicting the long-time vortex structures in two-dimensional turbulence is of long standing interest, starting with the work on dynamics of point vortices by Onsager \cite{Onsager}.  There have been
a number of theories developed to this effect.   We briefly review the celebrated mean-field theory of  Miller \cite{Miller} and Robert
\cite{Robert} to give context to our result. The idea is to describe
an equilibrium configuration $\omega_{eq}$ satisfying
\be
\omega(t) \ \ \xrightarrow[]{t \to \infty} \ \  \omega_{eq} 
\ee
in some sense. If $\omega(t)$ is an Euler path with bounded initial vorticity, then one has the information
\begin{enumerate}
\item conservation of energy: 
\be\label{enercons}
\|u(t)\|_{L^2(\mathbb{T}^2)} = \|u_0\|_{L^2(\mathbb{T}^2)},
\ee
\item conservation of  vorticity ``casmirs": for any continuous $f$,
\be\label{ifinv}
I_f:= \int_{\mathbb{T}^2} f(\omega(x,t)) \rmd x = \int_{\mathbb{T}^2} f(\omega_0(x)) \rmd x.
\ee
\end{enumerate}
For long-time limits of Euler flows, there is a natural candidate object to describe $ \omega_{eq}$.  In particular, provided only $\omega_0\in L^\infty(\mathbb{T}^2)$, then $\omega(t)\in L^\infty(\mathbb{T}^2)$ is the unique solution of Euler \cite{Yudovich} and  in the weak--$*$ sense
\be\label{Onsagertheory}
\lim_{n\to\infty} \int_{\mathbb{T}^2} \varphi(x) \omega(x,t_n) \rmd x = \int_{\mathbb{T}^2} \varphi(x) \bar{\omega}(x)\rmd x,  \qquad \forall \varphi\in L^1(\mathbb{T}^2)
\ee
for some $\bar{\omega}\in L^\infty(\mathbb{T}^2)$ and some subsequence $t_n\to \infty$ as $n\to \infty$.
However, large oscillations can remain in this limit.  In particular, the above convergence  does not imply for all continuous functions $f$ that $f(\omega(x,t_n))$ converges to $f(\bar{\omega}(x))$ 
in the same sense, so it is not clear how the information \eqref{enercons} and \eqref{ifinv} can be retained and in what sense.  On the other hand, the fundamental theorem of Young measures guarantees 
\be\label{youngmeasurelim}
\lim_{n\to\infty} \int_{\mathbb{T}^2} \varphi(x) f(\omega(x,t_n))  \rmd x= \int_{\mathbb{T}^2} \varphi(x) \int_{-M}^M f(y) \nu_x(\rmd y) \rmd x 
\ee
with $M=\|\omega_0\|_{L^\infty(\mathbb{T}^2)}$.
Note that, having introduced the Young measure $\nu_x(\rmd y)$, the convergence \eqref{Onsagertheory} holds with 
\be \label{barom}
 \bar{\omega}(x)=  \int_{-M}^M y \nu_x(\rmd y), \qquad \forall f\in C([-M,M]).
\ee
Kraichnan developed a theory for the equilibrium distribution $ \bar{\omega}$ discarding most of the information on the casmirs, keeping only conservation of energy and enstrophy \cite{Kraichnan}.  However, it was since recognized that invariants involving higher powers of vorticity should not be neglected on compact domains such as $\mathbb{T}^2$. 
In order to retain as much information about the Euler solution as possible, Miller \cite{Miller} and Robert
\cite{Robert} independently suggested that the long-time vorticity distribution resulting from freely decaying two-dimensional turbulence is a Young measure of the form
\be \label{MRmeasure}
\nu_x(\rmd y) = \rho(x,y) \rmd y.
\ee
These Young measures have the property that their marginal distribution is the (initial) vorticity distribution function \eqref{distconv}, which is left invariant under the Euler flow.  Thus, if a measure \eqref{MRmeasure} with 
the above property can be constructed such that also the energy associated to $\bar{\omega}$ equals that of $\omega_0$, then the information on all ideal invariants is retained at the level of the predicted equilibrium distribution. Miller and Robert provide such a construction.\footnote{We remark that the Miller--Robert theory applies for any compact domain $\Omega\subset\mathbb{R}^2$ with smooth boundary, with the  torus $\Omega=\mathbb{T}^2$ as a special case. It is worth noting that convergence of higher-order vorticity moments in the zero viscosity limit on domains with boundaries is  -- in general -- false. In fact, if the Euler velocity $u$ is not identically zero along the boundary and no-slip Navier-Stokes solutions converge to these e.g. $u^\nu\wc u$ weakly in $L^\infty(0,T;L^2(\Omega))$, then $\limsup_{\nu\to 0} \|\omega^\nu\|_{L^\infty(0,T;L^p(\Omega))}= \infty$ for all $p\in (1,\infty]$ (see Theorem 3.1 of \cite{Kelliher}).  If weak convergence fails to hold, then by Kato's energy dissipation condition we know  $\limsup_{\nu\to 0} \|\omega^\nu\|_{L^2(0,T;L^2(\Omega))}= \infty$. Thus, unless the Euler solution is identically zero on the boundary, higher moments of vorticity must diverge in the inviscid limit, presenting a great difficulty for the Miller--Robert theory as it applies to inviscid limits.}  Specifically, by a Boltzmann counting argument, they showed that the entropy associated with a given density $\rho(x,y)$ of the Young measure has a specific form.  Assuming ergodicity at long times, i.e. that the 2D Euler flow is sufficiently chaotic in phase space, they
suggested to maximize this entropy subject to the above constraints.
The prediction of the theory is the long-time distribution is
\be
\rho(x,y) = \frac{\exp\left(\beta\left[y\bar{\psi}(x)+ \mu(y)\right]\right)}{\int_{-M}^M \exp\left(\beta\left[y\bar{\psi}(x)+ \mu(y)\right]\right)\rmd y},
\ee
where the ``inverse temperature" $\beta$ and ``chemical potential" $\mu(y)$ are Lagrange multipliers to enforce energy conservation and the marginal density $\pi_{\omega_0}[\rmd y]$ respectively, and where 
the stream function $\bar{\psi}$ solves 
\be
\Delta \bar{\psi}(x) = \bar{\omega}= \frac{\int_{-M}^M y\exp\left(\beta\left[y\bar{\psi}(x)+ \mu(y)\right]\right)\rmd y}{\int_{-M}^M \exp\left(\beta\left[y\bar{\psi}(x)+ \mu(y)\right]\right)\rmd y}.
\ee
Thus, the prediction is that the expected
(average or coarsened) vorticity solves a very particular steady Euler equation
$\omega=F(\psi)$ where $\psi$ is the stream function. The function $F$ depends on the distribution $\pi_{\omega_0}[\rmd y]$ and the energy $E_0$.  
It is important to remark that conservation individual casmirs may not survive as $t\to\infty$, but that according to this theory, at a given energy $E_0$, they are forever remembered at the level of the equilibrium distribution.
Some numerical simulations have provided corroboratory evidence supporting this theory over competitive ones such as the Onsager-Joyce-Montgomery theory, at least in situations where $\omega_0$ is supported on a finite area \cite{Sommeria}. Whether or not the theory rigorously applies is open.

There are two major questions remaining about the domain of applicability of the Miller--Robert theory.  The first being whether or not 2D Euler possesses the requisite ergodicity properties to justify entropy maximization.   The second, and the one that motivates the present study, is whether the theory should apply to 2D Navier-Stokes solutions at small viscosity. This is related to the issue of anomalies in ideally conserved quantities.  For energy, there is no question since $E^\nu(t):=\frac{1}{2} \int_{\mathbb{T}^2} |u^\nu(t)|^2 \rmd x \xrightarrow[]{\nu \to 0} E_0$ for any finite time under the assumption that $\omega_0\in L^\infty(\mathbb{T}^2)$. On the other hand, it has not been clear that high-order ideal
moments such as $I_n^\nu= \int_{\mathbb{T}^2} |\omega^\nu(t)|^n \rmd x$ for $n>2$ will be conserved in the limit of zero viscosity or if there will be an associated anomaly do to fine-scale mixing of the vorticity field.  If they are not, it seems unlikely that these casmirs should be remembered at the level of the equilibrium distribution of vorticity.
Our Theorem establishes that there can be no such anomalies of higher-order invariants on any finite time interval $[0,T]$ with $T$ arbitrarily large. Thus, it shows that the dependence of $F$ on viscosity is slow which provides a partial foundation for the Miller--Robert theory as it applies to slightly viscous fluids.

 \subsection*{Acknowledgments} We would like to thank 
Helena J. Nussenzveig Lopes for insightful comments.  The research of PC was partially supported by NSF grant DMS-1713985.
Research of TD was partially supported by
NSF grant DMS-1703997.  Research of TE was partially supported by
NSF grant DMS-1817134.

\bibliographystyle{cpam}

\end{document}